\newtheorem{theorem}{Theorem}[section]
\newtheorem{definition}[theorem]{Definition}
\newtheorem{lemma}[theorem]{Lemma}
\newtheorem{proposition}[theorem]{Proposition}
\newtheorem{remark}[theorem]{Remark}
\numberwithin{equation}{section}
\begin{document}

%

\title{$p(x)$-Laplacian-Like Neumann Problems in Variable-Exponent Sobolev Spaces Via Topological Degree Methods}

\author[affil1]{ Mohamed El Ouaarabi}
\ead{mohamedelouaarabi93@gmail.com}
\author[affil1]{Chakir Allalou}
\ead{chakir.allalou@yahoo.fr}
\author[affil1]{Said Melliani}
\ead{s.melliani@usms.ma}
\address[affil1]{Laboratory LMACS, Faculty of Science and Technology, Sultan Moulay Slimane University, Beni Mellal, BP 523, 23000, Morocco.}
\newcommand{\AuthorNames}{Mohamed El Ouaarabi et al.}

\newcommand{\FilMSC}{Primary 35J60; Secondary 35D30, 47H11, 46E35.}
\newcommand{\FilKeywords}{$p(x)$-Laplacian-like operators,  variable-exponent Sobolev spaces, capillarity phenomena, topological degree methods.}
\newcommand{\FilCommunicated}{Maria Alessandra Ragusa}

\begin{abstract}
In this paper, we investigate the existence of a "weak solutions" for a Neumann problems of $p(x)$-Laplacian-like operators, originated from a capillary phenomena, of the following form
\begin{equation*}
	\displaystyle\left\{\begin{array}{ll}
		\displaystyle-{\rm{div}}\Big(\vert\nabla u\vert^{p(x)-2}\nabla u+\frac{\vert\nabla u\vert^{2p(x)-2}\nabla u}{\sqrt{1+\vert\nabla u\vert^{2p(x)}}}\Big)=\lambda f(x, u, \nabla u) & \mathrm{i}\mathrm{n}\ \Omega,\\\\
		\Big(\vert\nabla u\vert^{p(x)-2}\nabla u+\frac{\vert\nabla u\vert^{2p(x)-2}\nabla u}{\sqrt{1+\vert\nabla u\vert^{2p(x)}}}\Big)\frac{\partial u}{\partial\eta}=0 & \mathrm{o}\mathrm{n}\ \partial\Omega,
	\end{array}\right.
\end{equation*}
in the setting of the variable-exponent Sobolev spaces $W^{1,p(x)}(\Omega)$, where $\Omega$ is a smooth bounded domain in $\mathbb{R}^{N}$, $p(x)\in C_{+}(\overline{\Omega})$ and $\lambda$ is a real parameter. Based on the topological degree for a class of demicontinuous operators of generalized $(S_{+})$ type and the theory of variable-exponent Sobolev spaces, we obtain a result on the existence of weak solutions to the considered problem.
\end{abstract}

\maketitle

\makeatletter
\renewcommand\@makefnmark%
{\mbox{\textsuperscript{\normalfont\@thefnmark)}}}
\makeatother
\section{Introduction}\label{Intro}
In recent years, partial differential equations with nonlinearities and nonconstant exponents have received a lot of attention. Perhaps the impulse for this comes from the new search field that reflects a new type of physical phenomenon is a class of nonlinear problems with variable exponents. Modeling with classic Lebesgue and Sobolev spaces has been demonstrated to be limited for a number of materials with inhomogeneities. In the subject of fluid mechanics, for example, great emphasis has been paid to the study of electrorological fluids, which have the ability to modify their mechanical properties when exposed to an electric field (see \cite{X,Y,Z,W}). Rajagopal and M. Ruzicka recently developed a very interesting model for these fluids in \cite{ref15} (see also \cite{ref16}), taking into account the delicate interaction between the electric field $E(x)$ and the moving liquid. This type of problem's energy is provided by $\displaystyle \int_{\Omega}\vert \nabla u\vert ^{p(x)}dx$. This type of energy can also be found in elasticity problems \cite{ref22}. The natural energy space in which such problems can be studied is the variable exponent Sobolev space $W^{1,p(x)}(\Omega)$. Other applications relate to image processing \cite{ref1,ref8}, elasticity \cite{ref31}, the flow in porous media \cite{ref4,ref21}, and problems in the calculus of variations involving variational integrals with nonstandard growth \cite{ref31,ref27,ref11}.

Let $\Omega$ be a smooth bounded domain in $\mathbb{R}^{N} (N\geq 2)$, with a Lipschitz boundary denoted by $\partial\Omega$. In this paper we deal with the question of the  existence of a weak solutions for a class of $p(x)$-Laplacian-like Neumann problems, arising from capillarity phenomena, of the following form:
\begin{equation}\label{P}
\displaystyle\left\{\begin{array}{ll}
\displaystyle-{\rm{div}}\Big(\vert\nabla u\vert^{p(x)-2}\nabla u+\frac{\vert\nabla u\vert^{2p(x)-2}\nabla u}{\sqrt{1+\vert\nabla u\vert^{2p(x)}}}\Big)=\lambda f(x, u, \nabla u) & \mathrm{i}\mathrm{n}\ \Omega,\\\\
\Big(\vert\nabla u\vert^{p(x)-2}\nabla u+\frac{\vert\nabla u\vert^{2p(x)-2}\nabla u}{\sqrt{1+\vert\nabla u\vert^{2p(x)}}}\Big)\frac{\partial u}{\partial\eta}=0 & \mathrm{o}\mathrm{n}\ \partial\Omega,
\end{array}\right.
\end{equation}
where $\frac{\partial u}{\partial\eta}$ is the exterior normal derivative, $p(x)\in C_{+}(\overline{\Omega})$ with $p^{-}:=\min\{h(x),\ x\in\overline{\Omega}\}\leq p(x)\leq p^{+}:=\max\{h(x),\ x\in\overline{\Omega}\}<\infty$, $\lambda$ is a real parameter and $f :\Omega\times \mathbb{R}\times \mathbb{R}^{N}\rightarrow \mathbb{R}$ is a Carath\'eodory function satisfying some non-standard growth condition. The expression $f(x, u, \nabla u)$ is often referred to as a convection term. Note that, since the nonlinearity $f$ depends on the gradient $\nabla u$, then the problem \eqref{P} does not have a variational structure, so the variational methods cannot be applied directly.

Capillarity can be briefly explained by considering the effects of two opposing forces: adhesion, i.e. the attractive (or repulsive) force between the molecules of the liquid and those of the container; and cohesion, i.e. the attractive force between the molecules of the liquid. The study of capillary phenomenon has gained some attention recently. This increasing interest is motivated not only by fascination in naturally occurring phenomena such as motion of drops, bubbles, and waves but also its importance in applied fields ranging from industrial and biomedical and pharmaceutical to microfluidic systems. In the context of the study of capillarity phenomena, recently, problem like \eqref{P} has begun to receive more and more attention, for instance \cite{Avci,Ge,Kim,Rodri,Sho,Vet,Zh}.

Let us recall some known results on problem \eqref{P}. For example, W. Ni and J. Serrin \cite{Ni1,Ni2} initiated the study of ground states for equations of the form
\begin{equation}\label{P2}
\displaystyle-{\rm{div}}\Big(\frac{\nabla u}{\sqrt{1+\vert\nabla u\vert^{2}}}\Big)=f(u) \quad \mathrm{i}\mathrm{n}\ \mathbb{R}^{N},
\end{equation}
with very general right hand side $f$. The operator $\displaystyle-{\rm{div}}\Big(\frac{\nabla u}{\sqrt{1+\vert\nabla u\vert^{2}}}\Big)$ is usually denoted as the prescribed mean curvature operator.

Radial (singular) solutions of the problem \eqref{P2} has been studied in the context of the analysis of capillary surfaces for a function $f$ of the form $f(u) = ku$, for $k > 0$ (for more details see \cite{Con,Fin,Joh}). In \cite{Cl}, the authors studied the existence of positive radial solutions of problem \eqref{P2} with $f(u)=\vert u\vert^{q-1}u$, $q>1$.

Obersnel and Omari in \cite{Obe} based on variational and combines critical point theory, the lower and upper solutions method and elliptic regularization, established the existence and multiplicity of positive solutions of the prescribed mean curvature problem
\begin{equation} \label{P3}
\displaystyle\left\{\begin{array}{ll}
\displaystyle-{\rm{div}}\Big(\frac{\nabla u}{\sqrt{1+\vert\nabla u\vert^{2}}}\Big)=\lambda f(x, u) & \mathrm{i}\mathrm{n}\ \Omega,\\\\
u=0 & \mathrm{o}\mathrm{n}\ \partial\Omega,
\end{array}\right.
\end{equation}
where $\lambda>0$ is a parameter and $f :\Omega\times \mathbb{R}\rightarrow \mathbb{R}$ is a Carathéodory function whose potential satisfies a suitable oscillating behavior at zero.

For $\lambda>0$ and $f$ independent of $\nabla u$ with Dirichlet boundary condition, M. Rodrigues \cite{Rodri}, by using Mountain Pass lemma and Fountain theorem, established the existence of non-trivial solutions for \eqref{P}.  

Problems like \eqref{P}, \eqref{P2} and \eqref{P3} play, as is well known, a role in differential geometry and in the theory of relativity.

In the present paper, we will generalize these works. Using a topological degree for a class of demicontinuous operators of generalized $(S_{+})$ type of \cite{BM} and the theory of the variable-exponent Sobolev spaces, we establish some new sufficient conditions under which the problem \eqref{P} possesses a weak solutions $u$ in $W^{1,p(x)}(\Omega)$. To the best of our knowledge, this is the first paper that discusses the $p(x)$-Laplacian-like type problems with convection term and Neumann boundary data via topological degree theory. For more informations about the history of this theory, the reader can refer to \cite{ACA1,ACA2,BM,CC}.

The remainder of the article is organized as follows. In section \ref{Prelimi}, we review some fundamental preliminaries about the functional framework where we will treat our problem. In Section \ref{Degree}, we introduce some classes of operators of generalized $(S_+)$ type, as well as the Berkovits topological degrees required for the proof of our main result. Finaly, in the Section \ref{Result}, we give our basic assumptions, some technical lemmas, and we will state and prove the main result of the paper.
\section{Preliminaries about the functional framework} \label{Prelimi}
In order to deal with the problem \eqref{P}, we need some theory of the variable-exponent Lebesgue-Sobolev spaces $L^{p(x)}(\Omega)$ and $W^{1,p(x)}(\Omega)$. For convenience, we only recall some basic facts with will be used later, we refer to \cite{ref6,ref9,ref13} for more details.

Let $\Omega$ be a smooth bounded domain in $\mathbb{R}^{N} (N\geq 2)$, with a Lipschitz boundary denoted by $\partial\Omega$. Denote
\begin{equation*}
	C_{+}(\overline{\Omega})=\Big\{h:h\in C(\overline{\Omega}) \mbox{ such that } h(x)>1 \mbox{ for any } x\in\overline{\Omega}\Big\}.
\end{equation*}
For any $h\in C_{+}(\overline{\Omega})$, we define
\begin{equation*}
	h^{+}\ :=\max\Big\{h(x),\ x\in\overline{\Omega}\Big\} \ \mbox{ and }\ h^{-}:=\min\Big\{h(x),\ x\in\overline{\Omega}\Big\}.
\end{equation*}
For any $p\in C_{+}(\overline{\Omega})$ we define the variable exponent Lebesgue space
\begin{equation*}
	L^{p(x)}(\Omega)=\Big\{ u:\Omega\rightarrow \mathbb{R}\ \mbox{ is measurable such that }\ \displaystyle \int_{\Omega}\vert u(x)\vert ^{p(x)}dx<+\infty\Big\},
\end{equation*}
with the norm
\begin{equation*}
	\vert u\vert _{p(x)}=\inf\{\lambda>0/\rho_{p(x)}(\frac{u}{\lambda})\leq 1\},
\end{equation*}
where
\begin{equation*}
	\rho_{p(x)}(u)=\int_{\Omega}\vert u(x)\vert ^{p(x)}dx,\ \forall u\in L^{p(x)}(\Omega).
\end{equation*}
\begin{proposition}\cite{ref6}	
	Let $(u_{n})$ and $u\in L^{p(\cdot)}(\Omega)$, then 
	\begin{equation}\label{i2}
	\vert u\vert _{p(x)}<1(resp. =1;>1)\ \Leftrightarrow\ \rho_{p(x)}(u)<1(resp. =1;>1)\,
	\end{equation}
	\begin{equation}\label{i3}
	\vert u\vert _{p(x)}>1\ \Rightarrow\ \vert u\vert _{p(x)}^{p^{-}}\leq\rho_{p(x)}(u)\leq\vert u\vert _{p(x)}^{p^{+}},
	\end{equation}
	\begin{equation}\label{i4}
	\vert u\vert _{p(x)}<1\ \Rightarrow\ \vert u\vert _{p(x)}^{p^{+}}\leq\rho_{p(x)}(u)\leq\vert u\vert _{p(x)}^{p^{-}},
	\end{equation}
	\begin{equation}\label{i5}
	\lim_{n\rightarrow\infty}\vert u_{n}-u\vert _{p(x)}=0\ \Leftrightarrow\ \lim_{n\rightarrow\infty}\rho_{p(x)}(u_{n}-u)=0.
	\end{equation}
\end{proposition}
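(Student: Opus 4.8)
The plan is to reduce everything to a single auxiliary fact about the behaviour of the modular under dilation, and then read off the four statements as consequences. Fix $u\in L^{p(\cdot)}(\Omega)$ with $u\neq 0$ (the case $u=0$ is trivial, since then both $\vert u\vert_{p(x)}$ and $\rho_{p(x)}(u)$ vanish) and consider the function $\varphi(\lambda)=\rho_{p(x)}(u/\lambda)=\int_{\Omega}\vert u(x)\vert^{p(x)}\lambda^{-p(x)}\,dx$ for $\lambda>0$. Because $1<p^{-}\leq p(x)\leq p^{+}<\infty$, the integrand is dominated on compact $\lambda$-intervals and depends continuously on $\lambda$, so by dominated convergence $\varphi$ is continuous on $(0,\infty)$; it is moreover strictly decreasing, with $\varphi(\lambda)\to\infty$ as $\lambda\to 0^{+}$ and $\varphi(\lambda)\to 0$ as $\lambda\to\infty$. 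Consequently the infimum defining $\vert u\vert_{p(x)}$ is attained and characterised by the unit-modular identity
\begin{equation*}
\rho_{p(x)}\left(\frac{u}{\vert u\vert_{p(x)}}\right)=1.
\end{equation*}
This identity, together with the observation that the finiteness $p^{+}<\infty$ is exactly what guarantees the continuity of $\varphi$, is the one place where real care is needed; everything else is algebra. I expect this step to be the main obstacle, in the sense that without $p^{+}<\infty$ the function $\varphi$ can jump across the value $1$ and the equivalences genuinely fail.

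Granting this, \eqref{i2} follows from the strict monotonicity of $\varphi$: writing $\lambda_{0}=\vert u\vert_{p(x)}$, one has $\varphi(1)=\rho_{p(x)}(u)$ and $\varphi(\lambda_{0})=1$, so $\lambda_{0}<1$, $=1$, $>1$ forces $\varphi(1)<1$, $=1$, $>1$ respectively, and since the three cases are mutually exclusive and exhaustive the equivalences hold in both directions.

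For \eqref{i3} and \eqref{i4} I would start again from $\rho_{p(x)}(u/\lambda_{0})=1$ and insert the pointwise bounds on $\lambda_{0}^{-p(x)}$. If $\lambda_{0}>1$ then $\lambda_{0}^{-p^{+}}\leq\lambda_{0}^{-p(x)}\leq\lambda_{0}^{-p^{-}}$, and integrating $\lambda_{0}^{-p^{+}}\vert u\vert^{p(x)}\leq\vert u\vert^{p(x)}\lambda_{0}^{-p(x)}\leq\lambda_{0}^{-p^{-}}\vert u\vert^{p(x)}$ over $\Omega$ yields $\lambda_{0}^{-p^{+}}\rho_{p(x)}(u)\leq 1\leq\lambda_{0}^{-p^{-}}\rho_{p(x)}(u)$, which rearranges to $\lambda_{0}^{p^{-}}\leq\rho_{p(x)}(u)\leq\lambda_{0}^{p^{+}}$, i.e. \eqref{i3}. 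The case $\lambda_{0}<1$ only reverses the exponent bounds to $\lambda_{0}^{-p^{-}}\leq\lambda_{0}^{-p(x)}\leq\lambda_{0}^{-p^{+}}$ and delivers \eqref{i4} by the identical computation.

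Finally, \eqref{i5} is a corollary of \eqref{i2} and \eqref{i4} applied to $u_{n}-u$. If $\vert u_{n}-u\vert_{p(x)}\to 0$ then eventually the norm is $<1$, and the upper bound in \eqref{i4} gives $\rho_{p(x)}(u_{n}-u)\leq\vert u_{n}-u\vert_{p(x)}^{p^{-}}\to 0$. Conversely, if $\rho_{p(x)}(u_{n}-u)\to 0$ then eventually the modular is $<1$, hence by \eqref{i2} so is the norm, and the lower bound in \eqref{i4} gives $\vert u_{n}-u\vert_{p(x)}^{p^{+}}\leq\rho_{p(x)}(u_{n}-u)\to 0$, whence $\vert u_{n}-u\vert_{p(x)}\to 0$ because $p^{+}<\infty$. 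Thus all four assertions follow once the unit-modular identity and the continuity of $\varphi$ are in hand.
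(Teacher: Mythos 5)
Your proof is correct. Note that the paper itself offers no proof of this proposition: it is quoted verbatim from the cited reference of Fan and Zhao, and your argument --- establishing the unit-modular identity $\rho_{p(x)}\bigl(u/\vert u\vert_{p(x)}\bigr)=1$ via continuity and strict monotonicity of $\lambda\mapsto\rho_{p(x)}(u/\lambda)$, then reading off \eqref{i2}--\eqref{i5} by elementary comparison of $\lambda_{0}^{-p(x)}$ with $\lambda_{0}^{-p^{\pm}}$ --- is precisely the classical proof given in that reference, so there is no methodological divergence to report.
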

\begin{remark}
	Notice that, from \eqref{i3} and \eqref{i4}, we can deduce the inequalities
	\begin{equation}\label{i6}
	\vert u\vert _{p(x)}\leq\rho_{p(x)}(u)+1,
	\end{equation}
	\begin{equation}\label{i7}
	\rho_{p(x)}(u)\leq\vert u\vert _{p(x)}^{p^{-}}+\vert u\vert _{p(x)}^{p^{+}}.
	\end{equation}
\end{remark}
\begin{proposition}\cite[Theorem 2.5 and Corollary 2.7]{ref9}	
	The spaces $L^{p(x)}(\Omega)$ is a separable and reflexive Banach spaces.
\end{proposition}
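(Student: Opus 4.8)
The plan is to verify in turn the three asserted properties---that $L^{p(x)}(\Omega)$ is a Banach space, that it is separable, and that it is reflexive---relying throughout on the modular--norm relations \eqref{i2}--\eqref{i5} already recorded.

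First, for \textbf{completeness}, I would take a Cauchy sequence $(u_n)$ in $L^{p(x)}(\Omega)$. Since $1<p^{-}\le p(x)$ on the bounded set $\Omega$, the inclusion $L^{p(x)}(\Omega)\hookrightarrow L^{1}(\Omega)$ is continuous, so $(u_n)$ is Cauchy in $L^{1}(\Omega)$ and converges there to some $u$; passing to a subsequence, $u_n\to u$ almost everywhere. Completeness then follows from Fatou's lemma applied to the modular: for fixed $\varepsilon>0$ one has $\rho_{p(x)}\big((u_n-u_m)/\varepsilon\big)\le 1$ for $n,m$ large, and letting $m\to\infty$ along the a.e.-convergent subsequence gives $\rho_{p(x)}\big((u_n-u)/\varepsilon\big)\le 1$. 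By the definition of the Luxemburg norm this yields $\vert u_n-u\vert_{p(x)}\le\varepsilon$ and, in particular, $u\in L^{p(x)}(\Omega)$.

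Next, for \textbf{separability}, I would first establish that simple functions are dense: given $u\in L^{p(x)}(\Omega)$, choose simple functions $s_n$ with $\vert s_n\vert\le\vert u\vert$ and $s_n\to u$ pointwise, so that $\rho_{p(x)}\big((u-s_n)/\varepsilon\big)\to 0$ by dominated convergence and hence $\vert u-s_n\vert_{p(x)}\to 0$ via the modular-convergence equivalence \eqref{i5}. Restricting the underlying level sets to a fixed countable generating family (for instance dyadic cubes intersected with $\Omega$) and allowing only rational coefficients then produces a countable dense subset, which gives separability.

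The main obstacle is \textbf{reflexivity}. The cleanest route is to show that $L^{p(x)}(\Omega)$ is uniformly convex under the hypothesis $1<p^{-}\le p^{+}<\infty$ and then to invoke the Milman--Pettis theorem. The heart of the matter is a Clarkson-type inequality at the level of the modular $\rho_{p(x)}$, which is converted into a genuine uniform-convexity estimate for the Luxemburg norm through the equivalences \eqref{i2}--\eqref{i4}; the fact that $p(x)$ is bounded away from $1$ and from $\infty$ is exactly what makes these estimates uniform in $x$. An alternative I would keep in reserve is the duality route: establish the generalized H\"older inequality, identify the dual $\big(L^{p(x)}(\Omega)\big)^{*}$ with $L^{p'(x)}(\Omega)$, where $p'(x)=p(x)/(p(x)-1)$, by a Riesz-type representation, apply the same identification to the conjugate exponent, and verify that the canonical embedding into the bidual is onto. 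In either approach the boundedness of the exponent is the crucial structural input.
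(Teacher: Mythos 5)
This proposition is nowhere proved in the paper: it is quoted directly from Kov\'a\v{c}ik and R\'akosn\'ik \cite{ref9} (Theorem 2.5 and Corollary 2.7), so your proposal must be judged as a self-contained proof rather than against an argument in the text. Your completeness step (Cauchy in $L^{p(x)}(\Omega)$ implies Cauchy in $L^{1}(\Omega)$ by H\"older on the bounded domain, pass to an a.e.\ convergent subsequence, then apply Fatou to the modular $\rho_{p(x)}\big((u_n-u)/\varepsilon\big)$ and use the definition of the Luxemburg norm) is correct and is essentially the standard argument, and your separability step (density of simple functions via dominated convergence together with \eqref{i5}, then dyadic level sets and rational coefficients) is also sound.

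The gap is in reflexivity, which is the only genuinely nontrivial assertion. The pointwise Clarkson inequality $\big|\frac{a+b}{2}\big|^{p(x)}+\big|\frac{a-b}{2}\big|^{p(x)}\le \frac12|a|^{p(x)}+\frac12|b|^{p(x)}$ holds only where $p(x)\ge 2$; on the set $\{x\in\Omega : 1<p(x)<2\}$ the classical second Clarkson inequality is a statement about \emph{norms}, proved by arguments (Minkowski with exponent $p'/p$, interpolation) that do not localize pointwise, so there is no ``Clarkson-type inequality at the level of the modular'' available there, and the conversion through \eqref{i2}--\eqref{i4} has nothing to start from. Note that the proposition, sitting in the preliminaries, assumes only $p\in C_{+}(\overline{\Omega})$, i.e.\ $p^{-}>1$; it is only in Section 4 that the paper imposes $p^{-}\ge 2$, under which your route does work verbatim. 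Uniform convexity of $L^{p(x)}(\Omega)$ for $1<p^{-}\le p^{+}<\infty$ is in fact true, but its proof needs a different mechanism (a dichotomy argument for uniformly convex modulars, or a splitting of $\Omega$ into $\{p\ge 2\}$ and $\{p<2\}$ with Hanner-type estimates), none of which your sketch supplies. The route you keep ``in reserve'' --- generalized H\"older, a Riesz-type representation identifying $\big(L^{p(x)}(\Omega)\big)^{*}$ with $L^{p'(x)}(\Omega)$, applied twice to reach the bidual --- is exactly how the cited source proves its Corollary 2.7, requires only $1<p^{-}\le p^{+}<\infty$, and should be promoted from backup to the main argument.
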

\begin{proposition}\cite[Theorem 2.1]{ref9}	
	The conjugate space of $L^{p(x)}(\Omega)$ is $L^{p'(x)}(\Omega)$ where $\frac{1}{p(x)}+\frac{1}{p'(x)}=1$ for all $ x\in\Omega.$ For any $u\in L^{p(x)}(\Omega)$ and $v \in L^{p'(x)}(\Omega)$, we have the following H\"{o}lder-type inequality
	\begin{equation}\label{i1}
	\vert \int_{\Omega}uv\ dx\vert \leq(\frac{1}{p-}+\frac{1}{p^{'-}})\vert u\vert _{p(x)}\vert v\vert _{p'(x)}\leq 2\vert u\vert _{p(x)}\vert v\vert _{p'(x)}.
	\end{equation}
\end{proposition}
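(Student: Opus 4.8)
The plan is to handle the proposition's two assertions in turn: first the pointwise Hölder-type estimate \eqref{i1}, which rests on a single elementary inequality, and then the identification of $L^{p'(x)}(\Omega)$ with the conjugate space of $L^{p(x)}(\Omega)$, where the real work lies. The basic tool for the estimate is Young's inequality in the form $ab\leq \tfrac{a^{p(x)}}{p(x)}+\tfrac{b^{p'(x)}}{p'(x)}$, valid for nonnegative reals $a,b$ whenever $\tfrac{1}{p(x)}+\tfrac{1}{p'(x)}=1$. First I would dispose of the degenerate cases $\vert u\vert_{p(x)}=0$ or $\vert v\vert_{p'(x)}=0$, in which \eqref{i1} is trivial, and otherwise normalize by setting $\tilde u=u/\vert u\vert_{p(x)}$ and $\tilde v=v/\vert v\vert_{p'(x)}$. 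By homogeneity of the Luxemburg norm these satisfy $\vert\tilde u\vert_{p(x)}=\vert\tilde v\vert_{p'(x)}=1$, so the unit-sphere characterization \eqref{i2} gives the sharp modular identities $\rho_{p(x)}(\tilde u)=1$ and $\rho_{p'(x)}(\tilde v)=1$.

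Applying Young's inequality pointwise to $\vert\tilde u(x)\vert$ and $\vert\tilde v(x)\vert$, and then using the uniform bounds $p(x)\geq p^{-}$ and $p'(x)\geq (p')^{-}$ to replace $\tfrac{1}{p(x)}$ by $\tfrac{1}{p^{-}}$ and $\tfrac{1}{p'(x)}$ by $\tfrac{1}{(p')^{-}}$, integration over $\Omega$ yields
\begin{equation*}
\int_{\Omega}\vert\tilde u\,\tilde v\vert\,dx \leq \frac{1}{p^{-}}\,\rho_{p(x)}(\tilde u)+\frac{1}{(p')^{-}}\,\rho_{p'(x)}(\tilde v) = \frac{1}{p^{-}}+\frac{1}{(p')^{-}}.
\end{equation*}
Multiplying through by $\vert u\vert_{p(x)}\vert v\vert_{p'(x)}$ and bounding $\vert\int_{\Omega}uv\,dx\vert\leq\int_{\Omega}\vert uv\vert\,dx$ gives the first inequality in \eqref{i1}. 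The second inequality is then immediate: since $p(x)>1$ throughout $\overline{\Omega}$, we have $p^{-}>1$ and $(p')^{-}>1$, so each of $\tfrac{1}{p^{-}}$ and $\tfrac{1}{(p')^{-}}$ is at most $1$ and their sum is at most $2$.

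For the duality claim, one direction follows at once from the estimate just proved: each $v\in L^{p'(x)}(\Omega)$ defines a bounded functional $u\mapsto\int_{\Omega}uv\,dx$ on $L^{p(x)}(\Omega)$ with norm at most $2\vert v\vert_{p'(x)}$. The converse — that every $\Phi\in(L^{p(x)}(\Omega))^{*}$ is represented by a unique $v\in L^{p'(x)}(\Omega)$ — is where I expect the main obstacle. Following the classical constant-exponent template, I would define the set function $\nu(E)=\Phi(\chi_{E})$ on measurable $E\subseteq\Omega$, check that it is countably additive and absolutely continuous with respect to Lebesgue measure, and invoke the Radon--Nikodym theorem to obtain a density $v$ with $\Phi(u)=\int_{\Omega}uv\,dx$ first on simple functions and then, by density, on all of $L^{p(x)}(\Omega)$. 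The delicate step is proving $v\in L^{p'(x)}(\Omega)$ with the correct norm control: one must test $\Phi$ against truncated and suitably normalized functions constructed from $v$ to force $\rho_{p'(x)}(v)$ to be finite and comparable to $\Vert\Phi\Vert$. Since the excerpt already records separability and reflexivity of $L^{p(x)}(\Omega)$, for this representation part I would in practice defer to \cite{ref9}, treating the explicit constant $\tfrac{1}{p^{-}}+\tfrac{1}{(p')^{-}}$ in \eqref{i1} as the genuinely computational content to be verified here.
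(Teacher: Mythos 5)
Your proof of the H\"older-type inequality is correct, and there is nothing in the paper to compare it against: the paper does not prove this proposition at all, but simply quotes it from Kov\'a\v{c}ik--R\'akosn\'{\i}k \cite{ref9}. Your normalization-plus-Young argument is in fact the classical proof given in that reference: reduce to $\vert\tilde u\vert_{p(x)}=\vert\tilde v\vert_{p'(x)}=1$, use the unit-sphere case of \eqref{i2} to get $\rho_{p(x)}(\tilde u)=\rho_{p'(x)}(\tilde v)=1$, apply Young pointwise, and bound $\tfrac{1}{p(x)}\le\tfrac{1}{p^{-}}$, $\tfrac{1}{p'(x)}\le\tfrac{1}{(p')^{-}}$ before integrating. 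The final bound by $2$ is also handled correctly (both reciprocals are at most $1$ because $p^{-}>1$ and $(p')^{-}>1$, the latter using $p^{+}<\infty$). Your treatment of the duality statement --- easy direction from the estimate, hard direction sketched via Radon--Nikodym and then deferred to \cite{ref9} --- is a reasonable match for the paper's own level of detail, which is pure citation; just note that both the density of simple functions used in your sketch and the duality identification itself genuinely require $p^{+}<\infty$, which here is guaranteed by $p\in C_{+}(\overline{\Omega})$ with $\overline{\Omega}$ compact, and that in \cite{ref9} the H\"older inequality and the identification of the conjugate space are actually separate theorems, so the citation ``Theorem 2.1'' covers only the inequality part.
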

\begin{remark}\label{embedding}
	If $p_{1},\, p_{2}\in C_{+}(\overline{\Omega})$ with $p_{1}(x)\leq p_{2}(x)$ for any $x\in\overline{\Omega}$, then there exists the continuous embedding $L^{p_{2}(x)}(\Omega)\hookrightarrow L^{p_{1}(x)}(\Omega)$.
\end{remark}
Now, we define the variable exponent Sobolev space $W^{1,p(x)}(\Omega)$ as
\begin{equation*}
	W^{1,p(x)}(\Omega)=\Big\{u\in L^{p(x)}(\Omega)\mbox{ such that }\vert \nabla u\vert \in L^{p(x)}(\Omega)\Big\},
\end{equation*}

with the norm
\begin{equation*}
	\vert \vert u\vert \vert =\vert u\vert _{p(x)}+\vert \nabla u\vert _{p(x)}.
\end{equation*}
We denote by $W_{0}^{1,p(\cdot)}(\Omega)$ the closure of $C_{0}^{\infty}(\Omega)$ in $W^{1,p(\cdot)}(\Omega)$.
\begin{proposition}\cite{ref81,ref111}
	If the exponent $p(\cdot)$ satisfies the log-H\"{o}lder continuity condition, i.e. there is a constant $\alpha>0$ such that for every $x,\  y\in\Omega,\ x\neq y$ with $\vert x-y\vert \displaystyle \leq\frac{1}{2}$ one has
	\begin{equation}\label{i8}
	\vert p(x)-p(y)\vert \leq\frac{\alpha}{-\log\vert x-y\vert },
	\end{equation}
	then we have the poincar\'{e} inequality, i.e. the exists a constant $C>0$ depending only on $\Omega$ and the function $p$ such that
	\begin{equation}\label{i9}
	\vert u\vert _{p(x)}\leq C\vert \nabla u\vert _{p(x)},\ \forall\ u\in W_{0}^{1,p(\cdot)}(\Omega).
	\end{equation}
\end{proposition}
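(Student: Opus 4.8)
The plan is to establish the inequality first for test functions and then extend it by density. Since $W_{0}^{1,p(\cdot)}(\Omega)$ is by definition the closure of $C_{0}^{\infty}(\Omega)$ in the $W^{1,p(\cdot)}$-norm, and since both $u\mapsto|u|_{p(x)}$ and $u\mapsto|\nabla u|_{p(x)}$ are continuous for this norm, it suffices to prove that there is a constant $C>0$, independent of $u$, with $|u|_{p(x)}\leq C|\nabla u|_{p(x)}$ for every $u\in C_{0}^{\infty}(\Omega)$; passing to the limit along an approximating sequence then yields the claim on all of $W_{0}^{1,p(\cdot)}(\Omega)$.

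Fix $u\in C_{0}^{\infty}(\Omega)$ and extend it by zero to all of $\mathbb{R}^{N}$. The starting point is the classical, exponent-free pointwise representation obtained by integrating $\nabla u$ along rays issuing from $x$:
\[
|u(x)|\leq C_{N}\int_{\Omega}\frac{|\nabla u(y)|}{|x-y|^{N-1}}\,dy = C_{N}\,(I_{1}g)(x),\qquad g:=|\nabla u|,
\]
where $I_{1}$ denotes the Riesz potential of order one and $C_{N}$ depends only on $N$. Taking $L^{p(x)}$-norms and using monotonicity of the norm gives $|u|_{p(x)}\leq C_{N}\,|I_{1}g|_{p(x)}$, so the whole matter is reduced to controlling $|I_{1}g|_{p(x)}$ by $|g|_{p(x)}$.

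This last bound is exactly where the log-H\"older hypothesis \eqref{i8} intervenes. It is a fundamental fact of the theory that, when $p$ satisfies \eqref{i8} and $\Omega$ is bounded, the Hardy--Littlewood maximal operator is bounded on $L^{p(x)}(\Omega)$, and consequently the Riesz potential $I_{1}$ maps $L^{p(x)}(\Omega)$ boundedly into itself, say $|I_{1}g|_{p(x)}\leq c\,|g|_{p(x)}$ (see \cite{ref81,ref111}). Applying this to $g=|\nabla u|\in L^{p(x)}(\Omega)$ and setting $C:=C_{N}c$, which depends only on $N$, $\Omega$, and $p$ (through the log-H\"older constant $\alpha$ and $p^{\pm}$), we obtain
\[
|u|_{p(x)}\leq C_{N}\,|I_{1}g|_{p(x)}\leq C\,|g|_{p(x)} = C\,|\nabla u|_{p(x)}.
\]
Density in $W_{0}^{1,p(\cdot)}(\Omega)$ then transfers the inequality from $C_{0}^{\infty}(\Omega)$ to the whole space.

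The routine parts are the density reduction and the pointwise representation, both of which are entirely independent of the variability of the exponent. The genuine obstacle---and the only place where the hypothesis is actually used---is the boundedness of the maximal operator (equivalently, of $I_{1}$) on $L^{p(x)}$: this property fails for general variable exponents, and guaranteeing it is precisely the role of the log-H\"older continuity \eqref{i8}.
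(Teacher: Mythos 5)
The paper never proves this proposition: it is imported verbatim from the cited references \cite{ref81,ref111}, so there is no internal argument to compare yours against. Your proof is correct and is essentially the standard route taken in that literature (Riesz potential plus maximal function). The density reduction is legitimate, since $W_{0}^{1,p(\cdot)}(\Omega)$ is by definition the closure of $C_{0}^{\infty}(\Omega)$ and both sides of \eqref{i9} are Lipschitz functions of $u$ in the $W^{1,p(\cdot)}$-norm; the pointwise bound $\vert u(x)\vert \leq C_{N}(I_{1}\vert\nabla u\vert)(x)$ for compactly supported smooth $u$ is classical and exponent-free, as you say. The only step you leave compressed is the passage from boundedness of the Hardy--Littlewood maximal operator $M$ on $L^{p(\cdot)}(\Omega)$ (Diening's theorem, which needs exactly the local log-H\"older condition \eqref{i8} together with $1<p^{-}\leq p^{+}<\infty$; both hold here because $p\in C_{+}(\overline{\Omega})$ and $\overline{\Omega}$ is compact) to boundedness of the Riesz potential $I_{1}$ on $L^{p(\cdot)}(\Omega)$. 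For a bounded domain this is not automatic from general theory but follows from the pointwise estimate $(I_{1}g)(x)\leq C\,\mathrm{diam}(\Omega)\,(Mg)(x)$ for $g\geq 0$ supported in $\Omega$, obtained by splitting the integral over dyadic annuli $\{y:2^{-k-1}d\leq\vert x-y\vert\leq 2^{-k}d\}$, $d=\mathrm{diam}(\Omega)$, and summing the resulting geometric series; with that one line added, your argument is complete, and the constant $C$ depends only on $N$, $\Omega$ and $p$ (through $\alpha$ and $p^{\pm}$), as the statement requires.
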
 
In this paper we will use the following equivalent norm on $W^{1,p(\cdot)}(\Omega)$
\begin{equation*}
	\vert u\vert _{1,p(x)}=\vert \nabla u\vert _{p(x)}, 
\end{equation*}
which is equivalent to $\vert \vert \cdot\vert \vert $.\\
Furthermore, we have the compact embedding $W^{1,p(\cdot)}(\Omega)\hookrightarrow L^{p(\cdot)}(\Omega)$(see \cite{ref9}).
\begin{proposition}\cite{ref6,ref9}
	The spaces $\Big(W^{1,p(x)}(\Omega),\vert\cdot\vert _{1,p(x)}\Big)$ and $\Big(W_{0}^{1,p(x)}(\Omega),\vert\cdot\vert _{1,p(x)}\Big)$ are separable and reflexive Banach spaces.
\end{proposition}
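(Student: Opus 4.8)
The plan is to realize $W^{1,p(x)}(\Omega)$ as a closed subspace of a finite product of copies of $L^{p(x)}(\Omega)$ and to transfer, through this identification, the separability and reflexivity already recorded for $L^{p(x)}(\Omega)$. Concretely, I would consider the linear map
\[
T : W^{1,p(x)}(\Omega) \longrightarrow \big(L^{p(x)}(\Omega)\big)^{N+1}, \qquad T(u) = (u, \partial_1 u, \dots, \partial_N u),
\]
and equip the target with the product norm $\|(v_0, \dots, v_N)\| = \sum_{i=0}^{N} |v_i|_{p(x)}$. Then $T$ is a bounded linear injection, and since this product norm pulls back along $T$ to a norm equivalent to $|u|_{p(x)} + |\nabla u|_{p(x)}$, hence to $|\cdot|_{1,p(x)}$, the map $T$ is a linear homeomorphism onto its image. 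As separability and reflexivity are invariants of linear homeomorphism, it suffices to establish them for $T\big(W^{1,p(x)}(\Omega)\big)$.

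First I would verify that $W^{1,p(x)}(\Omega)$ is complete. Given a Cauchy sequence $(u_n)$, both $(u_n)$ and each $(\partial_i u_n)$ are Cauchy in $L^{p(x)}(\Omega)$, so by completeness of $L^{p(x)}(\Omega)$ they converge to limits $u$ and $w_i$. The key point, and the main technical obstacle, is to show that $w_i = \partial_i u$ in the weak sense, so that $u \in W^{1,p(x)}(\Omega)$ and $u_n \to u$ in the Sobolev norm. This is obtained by passing to the limit in the identity $\int_\Omega u_n\, \partial_i \varphi\, dx = -\int_\Omega \partial_i u_n\, \varphi\, dx$ for every $\varphi \in C_0^\infty(\Omega)$, where the convergence of each side is justified by the H\"older-type inequality \eqref{i1} together with the modular convergence relation \eqref{i5}. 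Completeness shows that $T\big(W^{1,p(x)}(\Omega)\big)$ is a complete, hence closed, subspace of $\big(L^{p(x)}(\Omega)\big)^{N+1}$.

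Next I would invoke the structural facts. Since $L^{p(x)}(\Omega)$ is separable and reflexive, the finite product $\big(L^{p(x)}(\Omega)\big)^{N+1}$ is again separable and reflexive. A closed subspace of a reflexive Banach space is reflexive, and any subset of a separable metric space is separable; hence the closed subspace $T\big(W^{1,p(x)}(\Omega)\big)$ enjoys both properties. Transporting them back through the homeomorphism $T$ proves that $\big(W^{1,p(x)}(\Omega), |\cdot|_{1,p(x)}\big)$ is a separable reflexive Banach space.

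Finally, for $W_0^{1,p(x)}(\Omega)$ no new work is required: by definition it is the closure of $C_0^\infty(\Omega)$ in $W^{1,p(x)}(\Omega)$, hence a closed subspace of a separable reflexive Banach space. The same two elementary facts---closed subspaces of reflexive spaces are reflexive, and subspaces of separable spaces are separable---yield that $\big(W_0^{1,p(x)}(\Omega), |\cdot|_{1,p(x)}\big)$ is a separable reflexive Banach space as well.
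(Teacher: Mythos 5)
The paper offers no proof of this proposition at all: it is quoted as known background, with the work deferred to \cite{ref6,ref9}. So there is no internal argument to compare against, and what you have done is supply the standard proof that those references use. Your route --- embed $W^{1,p(x)}(\Omega)$ into $\big(L^{p(x)}(\Omega)\big)^{N+1}$ by $u\mapsto(u,\partial_1 u,\dots,\partial_N u)$, prove completeness so that the image is a closed subspace, invoke the two hereditary facts (a closed subspace of a reflexive Banach space is reflexive; a subset of a separable metric space is separable), and then handle $W_0^{1,p(x)}(\Omega)$ as the closure of $C_0^\infty(\Omega)$, hence a closed subspace --- is exactly the classical argument. The one technically delicate step, identifying the limit of $(\partial_i u_n)$ as the weak derivative of the limit of $(u_n)$ by passing to the limit in $\int_\Omega u_n\,\partial_i\varphi\,dx=-\int_\Omega \partial_i u_n\,\varphi\,dx$ using the H\"older-type inequality \eqref{i1}, is handled correctly; note that H\"older alone suffices there, you do not actually need the modular equivalence \eqref{i5}.

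One caveat, which you inherit from the paper rather than introduce yourself: the proposition is stated for the norm $|\cdot|_{1,p(x)}=|\nabla\cdot|_{p(x)}$, and your final transfer rests on the paper's assertion that this is equivalent to $\Vert u\Vert=|u|_{p(x)}+|\nabla u|_{p(x)}$ on all of $W^{1,p(x)}(\Omega)$. That assertion is false on the full space: $|\nabla\cdot|_{p(x)}$ vanishes on nonzero constant functions, so it is only a seminorm there, and the Poincar\'e inequality \eqref{i9} that would repair this holds only on $W_0^{1,p(x)}(\Omega)$. What your argument genuinely establishes is the proposition for $\big(W^{1,p(x)}(\Omega),\Vert\cdot\Vert\big)$ and for $\big(W_0^{1,p(x)}(\Omega),|\cdot|_{1,p(x)}\big)$, which is the mathematically correct content; the hybrid statement for the full space equipped with the gradient norm is a defect of the paper itself, not a gap in your proof.
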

\begin{remark}
	The dual space of $W^{1,p(x)}(\Omega)$ denoted $W^{-1,p'(x)}(\Omega)$, is equipped with the norm
	\begin{equation*}
		\vert u\vert _{-1,p'(x)}=\inf\Big\{\vert u_{0}\vert _{p'(x)}+\sum_{i=1}^{N}\vert u_{i}\vert _{p'(x)}\Big\},
	\end{equation*}
	where the infinimum is taken on all possible decompositions $u=u_{0}-{\rm{div}}F$ with $u_{0}\in L^{p'(x)}(\Omega)$ and \\$F=(u_{1}, \ldots,u_{N})\in(L^{p'(x)}(\Omega))^{N}$.
\end{remark}
\section{A review on some classes of mappings and topological degree theory} \label{Degree}
Now, we give some results and properties from the theory of topological degree. We start by defining some classes of mappings. 

In what follows, let $X$ be a real separable reflexive Banach space and $X^{*}$ be its dual space with dual pairing $\langle \,\cdot\,,\,\cdot\,\rangle $ and given a nonempty subset $\Omega$ of $X$. Strong (weak) convergence is represented by the symbol $\rightarrow(\rightharpoonup)$.
\begin{definition}
	Let $Y$ be another real Banach space. A operator $F:\Omega\subset X\rightarrow Y$ is said to be
	\begin{enumerate}
		\item  bounded, if it takes any bounded set into a bounded set.
		\item  demicontinuous, if for any sequence $(u_{n})\subset\Omega,\; u_{n}\rightarrow u$ implies $F(u_{n})\rightharpoonup F(u)$. 
		\item  compact, if it is continuous and the image of any bounded set is relatively compact.
	\end{enumerate}
\end{definition}
\begin{definition}
	A mapping $F:\Omega\subset X\rightarrow X^{*}$ is said to be
	\begin{enumerate}
		\item of class $(S_{+})$, if for any sequence $(u_{n})\subset\Omega$ with $u_{n}\rightharpoonup u$ and $\displaystyle \limsup_{n\rightarrow \infty}\langle Fu_{n}, u_{n}-u\rangle\leq 0$, we have $u_{n}\rightarrow u.$
		\item quasimonotone, if for any sequence $(u_{n})\subset\Omega$ with $u_{n}\rightharpoonup u$, we have $\displaystyle \limsup_{n\rightarrow \infty}\langle Fu_{n},\ u_{n}-u\rangle\geq 0.$
	\end{enumerate}
\end{definition}
\begin{definition} Let  $T$ : $\Omega_{1}\subset X\rightarrow X^{*}$ be a bounded operator such that $\Omega\subset\Omega_{1}$.
	For any operator $F$ : $\Omega \subset X \rightarrow X$, we say that
	\begin{enumerate}
		\item  $\;F$ of class $\;(S_{+})_{T},$ if for any sequence $(u_{n})\subset \Omega$ with $u_{n}\rightharpoonup u,\; y_{n}:=Tu_{n}\rightharpoonup y$ and $\displaystyle \limsup_{n\rightarrow \infty}\langle Fu_{n},\ y_{n}-y\rangle\leq 0,$ we have $\;u_{n}\rightarrow u$. 
		\item  $\;F\;$ has the property $(QM)_{T}$, if for any sequence $(u_{n})\subset\Omega$ with $u_{n} \rightharpoonup u,\;\, y_{n}:=Tu_{n} \rightharpoonup y$, we have $\displaystyle \limsup_{n\rightarrow \infty}\langle Fu_{n}, y-y_{n}\rangle\geq 0.$
	\end{enumerate}
\end{definition}
In the sequel, we consider the following classes of operators:
\begin{align*}
	&\mathcal{F}_{1} (\Omega) :=\{ F : \Omega\rightarrow X^{*}\backslash F \mbox{ is bounded, demicontinuous }\mbox{and of class } (S_{+}) \},\\\\
	&\mathcal{F}_{B}(X):=\{F\in \mathcal{F}_{T,B}(\overline{E})\backslash\; E\in \mathcal{O},\ \mathrm{T}\in \mathcal{F}_{1}(\overline{\mathrm{E}})\}.\\\\
	&\mathcal{F}_{T,B}(\Omega):=\{ F:\Omega\rightarrow X\backslash F\mbox{ is bounded, demicontinuous } \mbox{and of class } (S_{+})_{T}\},\\\\ 
	&\mathcal{F}_{T}(\Omega):=\{ F:\Omega\rightarrow X\backslash\; F \;\mbox{is demicontinuous and of class } (S_{+})_{T}\},
\end{align*}
for any $\Omega\subset D(F)$, where $D(F)$ denotes the domain of $F$, and any $T\in \mathcal{F}_{1} (\Omega)$.\\ 
Now, let $\mathcal{O}$ be the collection of all bounded open set in $X$ and we define
\begin{align*}
	&\mathcal{F}(X):=\{F\in \mathcal{F}_{T}(\overline{E})\;\vert \; E\in \mathcal{O},\ \mathrm{T}\in \mathcal{F}_{1}(\overline{\mathrm{E}})\},
\end{align*}
where, $\mathrm{T}\in \mathcal{F}_{1}(\overline{\mathrm{E}})$ is called an essential inner map to $F$.
\begin{lemma} \label{lemma1}\cite[Lemma 2.3]{ref7} Let $T\in \mathcal{F}_{1}(\overline{E})$ be continuous and $S$ : $D(S)\subset X^{*}\rightarrow X$ be demicontinuous such that $T(\overline{E})\subset D(S)$, where $E\;$ is a bounded open set in a real reflexive Banach space $X$. Then the following statements are true :
	\begin{enumerate}
		\item If $S$ is quasimonotone, then $I+SoT\in \mathcal{F}_{T}(\overline{E})$, where $I$ denotes the identity operator.
		\item  If $S$ is of class $(S_{+})$, then $SoT\in \mathcal{F}_{T}(\overline{E})$.
	\end{enumerate}
\end{lemma}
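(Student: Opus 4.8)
The plan is to verify, for each of the two composite operators, the two properties that define membership in $\mathcal{F}_T(\overline{E})$: demicontinuity and membership of the class $(S_+)_T$ relative to the prescribed inner map $T$. Throughout I write $y_n := Tu_n$ and identify $X$ with $X^{**}$ by reflexivity, so that every pairing below is meaningful. Note that boundedness need not be checked, since the target class is $\mathcal{F}_T(\overline{E})$ rather than $\mathcal{F}_{T,B}(\overline{E})$.

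Demicontinuity is the routine part, and it is precisely here that the hypothesis that $T$ is \emph{continuous} (not merely demicontinuous) is used. If $u_n\to u$ in $\overline{E}$, then continuity of $T$ gives $Tu_n\to Tu$ strongly in $X^*$, and then demicontinuity of $S$ yields $S(Tu_n)\rightharpoonup S(Tu)$ in $X$; hence $S\circ T$, and a fortiori $I+S\circ T$, is demicontinuous.

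For assertion (2) I would proceed as follows. Take $(u_n)\subset\overline{E}$ with $u_n\rightharpoonup u$, $y_n\rightharpoonup y$ and $\limsup_n\langle (S\circ T)u_n,\,y_n-y\rangle\le 0$. Since $(S\circ T)u_n=Sy_n$, this reads $\limsup_n\langle Sy_n,\,y_n-y\rangle\le 0$ with $y_n\rightharpoonup y$ in $X^*$, so the $(S_+)$ property of $S$ gives $y_n\to y$ strongly in $X^*$. To pass this to $T$, write $\langle y_n,\,u_n-u\rangle=\langle y_n-y,\,u_n\rangle+\langle y,\,u_n-u\rangle$; the first term tends to $0$ because $y_n-y\to 0$ strongly while $(u_n)$ is bounded, and the second tends to $0$ since $u_n\rightharpoonup u$. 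Hence $\langle Tu_n,\,u_n-u\rangle\to 0$, so in particular $\limsup_n\langle Tu_n,\,u_n-u\rangle\le 0$, and the $(S_+)$ property of $T$ delivers $u_n\to u$.

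Assertion (1) is the delicate one, and the expansion $\langle (I+S\circ T)u_n,\,y_n-y\rangle=a_n+b_n$, with $a_n:=\langle u_n,\,y_n-y\rangle$ and $b_n:=\langle Sy_n,\,y_n-y\rangle$, exposes the difficulty: the cross term $a_n$ pairs two merely weakly convergent sequences and so need not converge, while quasimonotonicity of $S$ controls only $b_n$. My strategy is first to record the identity $\langle y_n,\,u_n-u\rangle-a_n\to 0$ (both $\langle y_n,u\rangle$ and $\langle y,u_n\rangle$ tend to $\langle y,u\rangle$), which reduces the goal to proving $\limsup_n a_n\le 0$ before invoking the $(S_+)$ property of $T$. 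To obtain this, I would choose a subsequence with $a_{n_k}\to\alpha:=\limsup_n a_n$; the crucial observation is that quasimonotonicity forces $\limsup_k b_{n_k}\ge 0$ along \emph{every} subsequence (since $y_{n_k}\rightharpoonup y$), not merely along the full sequence. Passing to a further subsequence on which $b_{n_{k_j}}\to\beta\ge 0$ while $a_{n_{k_j}}\to\alpha$, the hypothesis $\limsup_n(a_n+b_n)\le 0$ yields $\alpha+\beta\le 0$, whence $\alpha\le 0$. Therefore $\limsup_n\langle Tu_n,\,u_n-u\rangle\le 0$, and the $(S_+)$ property of $T$ again gives $u_n\to u$. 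The main obstacle is exactly this handling of the sign-indefinite cross term, and it is resolved by the "limsup along every subsequence" strength of quasimonotonicity rather than by any pointwise estimate.
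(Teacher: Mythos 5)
Your proof is correct, but there is nothing in the paper to compare it against: the authors state Lemma \ref{lemma1} with a citation to Kim and Hong \cite[Lemma 2.3]{ref7} and give no proof, so what you have produced is a self-contained reconstruction of the cited result, and it matches the standard argument for that lemma. The demicontinuity step correctly isolates where continuity (rather than mere demicontinuity) of $T$ is needed. In assertion (2), applying the $(S_+)$ property of $S$ (with $X^*$ as the underlying space and $X\cong X^{**}$ as its dual) to get $y_n\to y$ strongly, and then converting this into $\limsup_n\langle Tu_n,u_n-u\rangle\le 0$ so that the $(S_+)$ property of $T$ finishes, is exactly right. In assertion (1), your subsequence argument is the genuinely delicate point, and you resolve it correctly: quasimonotonicity applies to \emph{every} weakly convergent subsequence of $(y_n)$, so along a subsequence realizing $\alpha=\limsup_n a_n$ one still has $\limsup_k b_{n_k}\ge 0$, and a further extraction gives $\alpha+\beta\le 0$ with $\beta\ge 0$, hence $\alpha\le 0$; combined with the asymptotic identity $a_n-\langle Tu_n,u_n-u\rangle\to 0$, the $(S_+)$ property of $T$ then concludes. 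Two small blemishes, neither a gap: in assertion (2) the displayed identity $\langle y_n,u_n-u\rangle=\langle y_n-y,u_n\rangle+\langle y,u_n-u\rangle$ is not exact (the two sides differ by $\langle y_n-y,u\rangle$, which tends to zero, as you yourself observe in the analogous step of assertion (1)); and in assertion (1) you should record that $\limsup_k b_{n_k}$ is finite---because $b_n=(a_n+b_n)-a_n$ is eventually bounded above by the hypothesis and the boundedness of $(a_n)$---before extracting a subsequence along which $b$ converges.
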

\begin{definition}
	Suppose that $E$ is bounded open subset of a real reflexive Banach space $X$, $T\in \mathcal{F}_{1}(\overline{E})$ be continuous and let $F, S\in \mathcal{F}_{T}(\overline{E})$. The affine homotopy
	$\Lambda$ : $[0,1]\times\overline{E}\rightarrow X$ defined by
	$$\Lambda(t,u) :=(1-t)Fu+tSu,\quad \mbox{for} \quad (t,u)\in[0,1]\times\overline{E}$$
	is called an admissible affine homotopy with the common continuous essential inner map $T$.
\end{definition}
\begin{remark} \cite[Lemma 2.5]{ref7} \emph{The above affine homotopy is of class} $(S_{+})_{T}.$
\end{remark}
Next, as in \cite{ref7} we give the topological degree for the class $\mathcal{F}(X)$.

\begin{theorem}\label{thm_dgr} Let
	$$M=\big\{(F,E,h)\backslash E\in \mathcal{O},\;T\in \mathcal{F}_{1}(\overline{E}),\ F\in \mathcal{F}_{T,B}(\overline{E}),\ h\not\in F(\partial E)\big\}.$$
	Then, there exists a unique degree function $d:M\longrightarrow \mathbb{Z}$
	that satisfies the following properties:
	\begin{enumerate}
		\item (\emph{Normalization}) For any $h\in E$, we have $$d(I,E,h)=1.$$
		\item (\emph{Additivity}) Let $F\in \mathcal{F}_{T,B}(\overline{E})$. If $E_{1}$ and $E_{2}$ are two disjoint open subsets of $E$ such that $h\not\in F(\overline{E}\backslash (E_{1}\cup E_{2}))$, then we have
		$$d(F,E,h)=d(F,E_{1},h)+d(F,E_{2},h).$$
		\item (\emph{Homotopy invariance}) If $\Lambda$ : $[0,1]\times\overline{E}\rightarrow X$ is a bounded admissible affine homotopy with a common continuous essential inner map and $h$: $[0,1]\rightarrow X$ is a continuous path in $X$ such that $h(t)\not\in \Lambda(t,\partial E)$ for all $t\in[0,1]$, then $$d(\Lambda(t,\cdot),E,h(t))=const\ {\it for\ all }\ t\in[0,1].$$
		\item (\emph{Existence}) If $d(F,E,h)\neq 0$, then the equation $Fu=h$ has a solution in $E.$
		\item (\emph{ Boundary dependence}) {\it If} $F,\ S\in \mathcal{F}_{\mathrm{T}}(\overline{\mathrm{E}})$ {\it coincide on} $\partial E$ {\it and} $h\not\in F(\partial E)$, {\it then}
		$$ d(F,E, h)=d(S,\ E,\ h)$$
	\end{enumerate}
\end{theorem}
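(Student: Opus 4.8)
The plan is to \emph{construct} the degree $d$ by reducing every admissible triple $(F,E,h)\in M$ to a computation with the classical Browder--Skrypnik degree $d_{(S_{+})}$ for bounded, demicontinuous operators of class $(S_{+})$ that map $\overline{E}$ into the dual space $X^{*}$. I would take the existence of $d_{(S_{+})}$ and its four basic properties (normalization, additivity, homotopy invariance, existence) as the known base theory, noting that the essential inner map $T\in\mathcal{F}_{1}(\overline{E})$ is exactly an operator to which $d_{(S_{+})}$ applies. The guiding idea is that $T$ serves as a bridge between $X$ and $X^{*}$: since $F$ is of class $(S_{+})_{T}$, the solvability of $Fu=h$ in $X$ can be re-encoded, via $T$ and the Hammerstein-type factorization made explicit in Lemma~\ref{lemma1}, as the solvability of an associated $(S_{+})$-equation in $X^{*}$.

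Concretely, for a fixed continuous essential inner map $T$ I would define $d(F,E,h)$ to be the $(S_{+})$-degree $d_{(S_{+})}(\Phi_{F,T},E,0)$ of an auxiliary operator $\Phi_{F,T}:\overline{E}\to X^{*}$ manufactured from $F$, $h$ and $T$ in the manner dictated by Lemma~\ref{lemma1} (so that zeros of $\Phi_{F,T}$ correspond to solutions of $Fu=h$). With the definition in hand, each of the five listed properties would be transported from the corresponding property of $d_{(S_{+})}$: normalization follows because the identity corresponds to a reference $(S_{+})$-map of degree $1$; additivity and existence are immediate from excision and the solution property of $d_{(S_{+})}$; homotopy invariance follows because an admissible affine homotopy $\Lambda(t,\cdot)=(1-t)F+tS$ is of class $(S_{+})_{T}$ (the Remark following the homotopy definition) and therefore induces an admissible $(S_{+})$-homotopy of the auxiliary operators; and boundary dependence follows since $\Phi_{F,T}$ depends only on the values of $F$ on $\partial E$ whenever $h\notin F(\partial E)$.

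The step I expect to be the main obstacle is \emph{well-definedness}: the value $d(F,E,h)$ must not depend on the particular choice of continuous essential inner map $T$ (nor on the chosen factorization). To settle this I would connect two admissible inner maps $T_{0},T_{1}\in\mathcal{F}_{1}(\overline{E})$ by a suitable homotopy, check that $F$ remains of class $(S_{+})_{T_{t}}$ and that $h$ stays off the relevant boundary image throughout, and then invoke the homotopy invariance of $d_{(S_{+})}$ to conclude that the two resulting degrees coincide; verifying that the generalized $(S_{+})_{T}$ condition is preserved along this deformation is the delicate point. Finally, \emph{uniqueness} would be handled by the standard axiomatic argument: normalization fixes the degree on the reference configuration, additivity localizes it, and homotopy invariance deforms an arbitrary admissible $F$ to that reference map, so that any two functions satisfying the five properties are forced to agree; the boundary-dependence property guarantees that only the behaviour of $F$ on $\partial E$ enters, which closes the argument.
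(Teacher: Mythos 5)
Your strategy---reduce the construction of $d$ to the classical Browder--Skrypnik degree $d_{(S_{+})}$ by manufacturing an auxiliary operator $\Phi_{F,T}:\overline{E}\rightarrow X^{*}$ of class $(S_{+})$ whose zeros are the solutions of $Fu=h$---has a genuine gap at exactly its central step: you never say what $\Phi_{F,T}$ is, and no such general construction is available. Note the direction of the arrows: $F$ maps $\overline{E}\subset X$ into $X$, while the Browder--Skrypnik theory requires a map into $X^{*}$. Lemma \ref{lemma1}, which you invoke as the template, goes the opposite way to what you need: it takes $S:D(S)\subset X^{*}\rightarrow X$ (quasimonotone, or of class $(S_{+})$) and composes with $T:X\rightarrow X^{*}$ to produce maps $I+S\circ T$ and $S\circ T$ of class $(S_{+})_{T}$ from $X$ to $X$; it gives no recipe for converting a given $(S_{+})_{T}$ map back into an $(S_{+})$ map with values in $X^{*}$. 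The natural candidate $\Phi_{F,T}=J\circ(F-h)$, with $J$ the duality map, does not work: the $(S_{+})_{T}$ hypothesis controls pairings $\langle Fu_{n},Tu_{n}-y\rangle$ and says nothing about $\langle J(Fu_{n}-h),u_{n}-u\rangle$, and for the Hammerstein-type maps $F=I+\mathcal{S}\circ\mathcal{L}$ to which this theorem is actually applied in Section \ref{Result}, the verification would require the compact nonlinear part to send weakly convergent sequences to norm-convergent ones, which compact (as opposed to completely continuous) nonlinear maps need not do. This is not a technicality but the raison d'\^etre of the theorem: the class $(S_{+})_{T}$ strictly exceeds what the classical $(S_{+})$ degree covers (it contains the Leray--Schauder class and quasimonotone Hammerstein perturbations), which is precisely why Berkovits \cite{BM} had to build a new degree for maps $X\rightarrow X$ directly, as an extension of the Leray--Schauder degree, instead of reducing to $d_{(S_{+})}$.

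Two further steps would fail even granting the auxiliary operator. For well-definedness you propose to join two inner maps $T_{0},T_{1}\in\mathcal{F}_{1}(\overline{E})$ by a homotopy and keep $F$ of class $(S_{+})_{T_{t}}$ along the way; but $F$ being of class $(S_{+})_{T_{0}}$ and $(S_{+})_{T_{1}}$ gives no control whatsoever over $(S_{+})_{(1-t)T_{0}+tT_{1}}$, and you acknowledge the point without supplying an argument. For uniqueness, the plan ``deform an arbitrary admissible $F$ to the reference map by homotopy invariance'' is impossible whenever $d(F,E,h)\neq 1$: an admissible affine homotopy with $h(t)\notin\Lambda(t,\partial E)$ connecting $F$ to $I$ would force every degree to equal $1$, so the actual uniqueness proof must be of a different nature. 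For comparison, the paper does not prove Theorem \ref{thm_dgr} at all: it quotes it from Kim--Hong \cite{ref7}, where the degree is defined by $d(F,E,h):=d_{B}(F\vert_{\overline{E}_{0}},E_{0},h)$, i.e.\ by restricting $F$ to an open subset $E_{0}\supset F^{-1}(h)$ on which it is bounded and invoking the Berkovits degree $d_{B}$ of \cite{BM}; existence and uniqueness are inherited from that theory, whose construction for maps $X\rightarrow X$ is essentially different from the reduction you envisage.
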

\begin{definition}\cite[ Definition 3.3]{ref7} The above degree is defined as follows:
	$$ d(F, E, h)\ :=d_{B}(F\vert _{\overline{E}_{0}},E_{0}, h),$$
	where $d_{B}$ is the Berkovits degree \cite{BM} {\it and} $E_{0}$ {\it is any open subset of} $E$ {\it with} $F^{-1}(h)\subset E_{0}$ {\it and} $F$ {\it is bounded on} $\overline{E}_{0}$.
\end{definition}
\section{Main results}\label{Result}
In this section we will discuss the existence of weak solutions of \eqref{P}. For this, we list our assumptions on $f$ associated with our problem to show the existence result.

From new on, we always assume that $\Omega\subset \mathbb{R}^{N} (N\geq 2$) is a bounded domain with a Lipschitz boundary $\partial\Omega,\ p\in C_{+}(\overline{\Omega})$ satisfy the $\log$-H\"{o}lder continuity condition \eqref{i8} with $ 2\leq p^{-}\leq p(x)\leq p^{+}<\infty$ and \\$f$ : $\Omega\times \mathbb{R}\times \mathbb{R}^{N}\rightarrow \mathbb{R}$ is a function such that:\\

$(A_{1})$ $f$ satisfies the Carath\'odory condition.

$(A_{2})$ There exists $\beta_{1}> 0$ and $\gamma\in L^{p'(x)}(\Omega)$ such that
$$
\vert f(x,\zeta, \xi)\vert \leq \beta_{1}(\gamma(x)+\vert \zeta\vert ^{q(x)-1}+\vert \xi\vert ^{q(x)-1})
$$

for a.e. $ x\in\Omega$ and all $(\zeta, \xi)\in \mathbb{R}\times \mathbb{R}^{N}$, where $q\in C_{+}(\overline{\Omega})$ with \\$2\leq q^{-}\leq q(x)\leq q^{+}<p^{-}$.\\
The definition of a weak solutions for problem \eqref{P} can be stated as follows.
\begin{definition}\label{defsolu}
	We call that $u\in W^{1,p(x)}(\Omega)$ is a weak solution of \eqref{P} if
	\begin{equation*}
		\begin{array}{ll}
			\displaystyle  \int_{\Omega}\Big(\vert\nabla u\vert^{p(x)-2}\nabla u+\frac{\vert\nabla u\vert^{2p(x)-2}\nabla u}{\sqrt{1+\vert\nabla u\vert^{2p(x)}}}\Big)\nabla\varphi dx=\int_{\Omega}\lambda f(x, u, \nabla u)\varphi dx,\\
			\mbox{for all} \ \varphi\in W^{1,p(x)}(\Omega).
		\end{array}
	\end{equation*}
\end{definition}
\begin{remark}
	\begin{itemize}
		\item Note that $\displaystyle  \int_{\Omega}\Big(\vert\nabla u\vert^{p(x)-2}\nabla u+\frac{\vert\nabla u\vert^{2p(x)-2}\nabla u}{\sqrt{1+\vert\nabla u\vert^{2p(x)}}}\Big)\nabla\varphi dx$ is well defined (see \cite{Rodri}).
		\item $ \lambda f(x,u,\nabla u)\in L^{p'(x)}(\Omega)$ under $u\in W^{1,p(x)}(\Omega)$ and the given hypotheses about the exponents $p$ and $q$ and assumption $(A_{2})$ because: $\gamma\in L^{p'(x)}(\Omega)$, $r(x)=(q(x)-1)p'(x)\in C_{+}(\overline{\Omega})$ with $r(x)<p(x)$. Then, by Remark \ref{embedding} we can conclude that $L^{p(x)}\hookrightarrow L^{r(x)}$.
		
		Hence, since $\varphi\in L^{p(x)}(\Omega)$, we have $\lambda f(x, u, \nabla u)\varphi\in L^{1}(\Omega)$. This implies that, the integral $\displaystyle \int_{\Omega}\lambda f(x, u, \nabla u)\varphi dx$ exist.
	\end{itemize}
\end{remark}
We are now in the position to get the existence result of weak solution for \eqref{P}.
\begin{theorem}\label{thex}
	If the assumptions $(A_1)-(A_2)$ hold, then the problem \eqref{P} possesses at least one weak solution $u$ in  $W^{1,p(x)}(\Omega)$.
\end{theorem}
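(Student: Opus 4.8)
The plan is to recast the weak formulation of Definition \ref{defsolu} as an operator equation on $X:=W^{1,p(x)}(\Omega)$ and to solve it with the degree of Theorem \ref{thm_dgr}. Define $\mathcal{L},\mathcal{N}_{\lambda}:X\to X^{*}$ by
\begin{equation*}
\langle \mathcal{L}u,v\rangle=\int_{\Omega}\Big(|\nabla u|^{p(x)-2}\nabla u+\frac{|\nabla u|^{2p(x)-2}\nabla u}{\sqrt{1+|\nabla u|^{2p(x)}}}\Big)\cdot\nabla v\,dx,\qquad \langle \mathcal{N}_{\lambda}u,v\rangle=\int_{\Omega}\lambda f(x,u,\nabla u)\,v\,dx,
\end{equation*}
so that, by Definition \ref{defsolu}, $u$ is a weak solution of \eqref{P} if and only if $\mathcal{L}u=\mathcal{N}_{\lambda}u$. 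My strategy is to prove that $\mathcal{L}$ is a homeomorphism of class $(S_{+})$ (hence an admissible continuous inner map $T=\mathcal{L}\in\mathcal{F}_{1}(X)$), that $\mathcal{N}_{\lambda}$ is bounded, demicontinuous and quasimonotone, and then to identify the weak solutions with the zeros of $\mathfrak{F}:=I-\mathcal{L}^{-1}\circ\mathcal{N}_{\lambda}$, which will lie in $\mathcal{F}_{\mathcal{L},B}(\overline{B_{R}})$ for a large ball $B_{R}$. Homotopy invariance and normalisation of the degree then conclude.

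First I would analyse $\mathcal{L}$. Boundedness and continuity follow from the growth of the vector field $\mathcal{A}(x,\xi):=|\xi|^{p(x)-2}\xi+|\xi|^{2p(x)-2}\xi/\sqrt{1+|\xi|^{2p(x)}}$ together with \eqref{i1} and \eqref{i7}. The essential point is strict monotonicity: $\mathcal{A}(x,\cdot)$ is the gradient in $\xi$ of a convex potential (whose leading part $\tfrac1{p(x)}|\xi|^{p(x)}$ is strictly convex), so $\langle \mathcal{L}u-\mathcal{L}v,u-v\rangle>0$ for $u\neq v$; since $p^{-}\geq 2$ this upgrades to the $(S_{+})$ property in the usual way, a sequence with $u_{n}\rightharpoonup u$ and $\limsup_{n}\langle\mathcal{L}u_{n},u_{n}-u\rangle\leq0$ being forced to converge strongly by the monotonicity inequalities and \eqref{i5}. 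Coercivity, $\langle\mathcal{L}u,u\rangle/\|u\|\to\infty$, is read off from $\langle\mathcal{L}u,u\rangle\geq\rho_{p(x)}(\nabla u)$ and \eqref{i3}. Strict monotonicity, coercivity, demicontinuity and reflexivity then give, by the Minty--Browder theorem, that $\mathcal{L}$ is a bijection with bounded continuous inverse; in particular $\mathcal{L}\in\mathcal{F}_{1}(X)$.

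Next I would treat $\mathcal{N}_{\lambda}$. Assumption $(A_{2})$ with $r(x)=(q(x)-1)p'(x)<p(x)$ shows, as in the Remark following Definition \ref{defsolu}, that $u\mapsto f(\cdot,u,\nabla u)$ maps $X$ boundedly and continuously into $L^{p'(x)}(\Omega)$, whence $\mathcal{N}_{\lambda}$ is bounded and continuous, a fortiori demicontinuous. The crucial observation is quasimonotonicity: if $u_{n}\rightharpoonup u$ in $X$, the compact embedding $W^{1,p(x)}(\Omega)\hookrightarrow L^{p(x)}(\Omega)$ gives $u_{n}\to u$ in $L^{p(x)}(\Omega)$, while $f(\cdot,u_{n},\nabla u_{n})$ stays bounded in $L^{p'(x)}(\Omega)$, so \eqref{i1} forces $\langle\mathcal{N}_{\lambda}u_{n},u_{n}-u\rangle\to0$; the gradient dependence is harmless here because the test element $u_{n}-u$ converges strongly in $L^{p(x)}(\Omega)$. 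The same compact embedding, through the compactness of the adjoint embedding $L^{p'(x)}(\Omega)\hookrightarrow X^{*}$, shows moreover that $\mathcal{N}_{\lambda}$ is compact. Applying Lemma \ref{lemma1}(1) with $T=\mathcal{L}$ and $S=-\mathcal{L}^{-1}\circ\mathcal{N}_{\lambda}\circ\mathcal{L}^{-1}$, which is demicontinuous and quasimonotone (the latter precisely because $\mathcal{N}_{\lambda}$ is compact), gives $\mathfrak{F}=I+S\circ\mathcal{L}=I-\mathcal{L}^{-1}\mathcal{N}_{\lambda}\in\mathcal{F}_{\mathcal{L}}(\overline{B_{R}})$, and boundedness of $I$, $\mathcal{L}^{-1}$ and $\mathcal{N}_{\lambda}$ places it in $\mathcal{F}_{\mathcal{L},B}(\overline{B_{R}})$.

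Finally I would run the degree argument with the admissible affine homotopy $\Lambda(t,u)=u-t\,\mathcal{L}^{-1}\mathcal{N}_{\lambda}u$, $t\in[0,1]$, whose common continuous inner map is $\mathcal{L}$. If $\Lambda(t,u)=0$ with $t\in(0,1]$, writing $z=\mathcal{L}^{-1}\mathcal{N}_{\lambda}u$ so that $u=tz$ and $\mathcal{L}z=\mathcal{N}_{\lambda}u$, then testing with $z$ and using $\langle\mathcal{L}z,z\rangle\geq\|z\|^{p^{-}}$ for $\|z\|>1$ together with $\|\mathcal{N}_{\lambda}u\|_{*}\leq C(1+\|u\|^{q^{+}-1})\leq C(1+\|z\|^{q^{+}-1})$ yields $\|z\|^{p^{-}}\leq C(\|z\|+\|z\|^{q^{+}})$; since $q^{+}<p^{-}$ this bounds $\|z\|$, hence $\|u\|=t\|z\|\leq\|z\|$, by a constant $R_{0}$ independent of $t$. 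Choosing $R>\max\{R_{0},1\}$ gives $0\notin\Lambda(t,\partial B_{R})$ for all $t$, so homotopy invariance and normalisation yield $d(\mathfrak{F},B_{R},0)=d(I,B_{R},0)=1$, and the existence property produces $u\in B_{R}$ with $\mathfrak{F}u=0$, i.e. $\mathcal{L}u=\mathcal{N}_{\lambda}u$, the required weak solution. I expect the main obstacle to be the second step: establishing strict monotonicity and the $(S_{+})$ property of the non-homogeneous capillarity operator $\mathcal{L}$ and that it is a homeomorphism, since the term $|\xi|^{2p(x)-2}\xi/\sqrt{1+|\xi|^{2p(x)}}$ destroys homogeneity and forces one to work directly with the monotonicity inequalities for $\mathcal{A}(x,\cdot)$ rather than quoting a standard $p(x)$-Laplacian result; the subcriticality $q^{+}<p^{-}$ is what makes the a priori bound in the last step work.
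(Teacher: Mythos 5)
Your proposal is, at bottom, the paper's own argument run in the primal space rather than in the dual. The paper reduces \eqref{i11} to the abstract Hammerstein equation $\varphi+\mathcal{S}\circ\mathcal{T}^{-1}\varphi=0$ for the new unknown $\varphi=\mathcal{T}u\in W^{-1,p'(x)}(\Omega)$, takes the essential inner map to be the inverse $\mathcal{T}^{-1}$, and feeds Lemma \ref{lemma1}(1) the convection operator $\mathcal{S}$ itself as the quasimonotone perturbation; you instead keep the unknown $u\in W^{1,p(x)}(\Omega)$, take the inner map to be the capillarity operator $\mathcal{L}=\mathcal{T}$, and must then use the composite $S=-\mathcal{L}^{-1}\circ\mathcal{N}_{\lambda}\circ\mathcal{L}^{-1}$ (quasimonotone because compact) to place $\mathfrak{F}=I-\mathcal{L}^{-1}\circ\mathcal{N}_{\lambda}$ in $\mathcal{F}_{\mathcal{L},B}$. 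Everything else is identical: the properties of the capillarity operator (Lemma \ref{lemma2}, i.e.\ \cite[Proposition 3.1]{Rodri}), the boundedness/continuity/compactness of the convection operator (Lemma \ref{lemma3}), Minty--Browder for the inverse, the a priori bound driven by $q^{+}<p^{-}$, and the affine homotopy to $I$ plus normalization and the existence property of the degree of Theorem \ref{thm_dgr}. Your version buys a bound directly on $\vert u\vert_{1,p(x)}$ and avoids estimating in the dual norm; the paper's version matches the Berkovits Hammerstein framework verbatim and uses a simpler perturbation in Lemma \ref{lemma1}. One small omission on your side: for the affine homotopy to be admissible you also need $I=\mathcal{L}^{-1}\circ\mathcal{L}\in\mathcal{F}_{\mathcal{L}}(\overline{B_{R}})$, which is Lemma \ref{lemma1}(2) with $S=\mathcal{L}^{-1}$; this is where the $(S_{+})$ property of the \emph{inverse}, not just of $\mathcal{L}$, gets used, exactly as in the paper.

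The step you flag as the main obstacle, invertibility of $\mathcal{L}$, is indeed the crux, and your treatment of it fails in the same way the paper's does. On the Neumann space $W^{1,p(x)}(\Omega)$ the operator $\mathcal{L}$ sees only $\nabla u$: for every constant $c$ one has $\mathcal{L}(u+c)=\mathcal{L}u$ and $\langle \mathcal{L}u,c\rangle=0$. Hence $\mathcal{L}$ is not injective, not strictly monotone, and not surjective (its range annihilates constants, consistent with the compatibility condition $\int_{\Omega}f(x,u,\nabla u)\,dx=0$ that any Neumann solution must satisfy), and your coercivity estimate $\langle\mathcal{L}u,u\rangle\geq\rho_{p(x)}(\nabla u)$ controls only the gradient seminorm: the Poincar\'e inequality \eqref{i9} that would upgrade it to the full norm $\vert u\vert_{p(x)}+\vert\nabla u\vert_{p(x)}$ holds on $W_{0}^{1,p(x)}(\Omega)$, not on $W^{1,p(x)}(\Omega)$. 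So Minty--Browder cannot be applied as stated. To be clear, this is not a gap you introduced: the paper does the identical thing, declaring $\vert\nabla u\vert_{p(x)}$ an equivalent norm on $W^{1,p(x)}(\Omega)$ and citing \cite[Proposition 3.1]{Rodri}, which is proved in the Dirichlet setting. But a complete proof, for you as for the paper, would require repairing this step, e.g.\ by working modulo constants or by including a zero-order term such as $\vert u\vert^{p(x)-2}u$ in the operator.
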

Before giving the proof of the Theorem \ref{thex}, we first give two lemmas that will be used later.
	
	Let us consider the following functional:
	\begin{equation*}
		\mathcal{J}(u):=\int_{\Omega}\frac{1}{p(x)}\Big(\vert\nabla u\vert^{p(x)}+\sqrt{1+\vert\nabla u\vert^{2p(x)}}\Big)dx.
	\end{equation*}
	It is obvious that the functional $\mathcal{J}$ is a continuously G\^ateaux differentiable and its G\^ateaux derivative at the point $u\in W^{1,p(x)}(\Omega)$ is the functional $\mathcal{T}:=\mathcal{J}'(u)\in W^{-1,p'(x)}(\Omega)$, given by
	\begin{equation*}
		\displaystyle\langle \mathcal{T}u,\varphi \rangle= \int_{\Omega}\Big(\vert\nabla u\vert^{p(x)-2}\nabla u+\frac{\vert\nabla u\vert^{2p(x)-2}\nabla u}{\sqrt{1+\vert\nabla u\vert^{2p(x)}}}\Big)\nabla\varphi dx, 
	\end{equation*}
	$ \mbox{ for all } \ u,\varphi\in W^{1,p(x)}(\Omega)$ where $\langle\cdot,\cdot \rangle$ the duality pairing between $W^{-1,p'(x)}(\Omega)$ and $W^{1,p(x)}(\Omega)$. Furthermore, the properties of the operator $\mathcal{T}$ are summarized in the following lemma (see \cite[Proposition 3.1.]{Rodri}).
	\begin{lemma}\label{lemma2}The mapping $$\begin{array}{lclllll}
		\mathcal{T}:W^{1,p(x)}(\Omega)\longrightarrow W^{-1,p'(x)}(\Omega) \\
		\displaystyle\langle \mathcal{T}u,\varphi \rangle= \int_{\Omega}\Big(\vert\nabla u\vert^{p(x)-2}\nabla u+\frac{\vert\nabla u\vert^{2p(x)-2}\nabla u}{\sqrt{1+\vert\nabla u\vert^{2p(x)}}}\Big)\nabla\varphi dx, 
		\end{array} $$
		{\it is a continuous, bounded, strictly monotone operator,} and {\it is a mapping of class} $(S_{+})$.
	\end{lemma}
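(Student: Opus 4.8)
The plan is to establish the four claimed properties of $\mathcal{T}$ — continuity, boundedness, strict monotonicity, and the $(S_+)$ property — by reducing each to known structural estimates for the $p(x)$-Laplacian-type integrand and then handling the extra capillarity term. The operator splits naturally as $\mathcal{T} = \mathcal{T}_1 + \mathcal{T}_2$, where $\langle \mathcal{T}_1 u, \varphi\rangle = \int_\Omega |\nabla u|^{p(x)-2}\nabla u \cdot \nabla\varphi\, dx$ is the usual $p(x)$-Laplacian (whose properties are classical, see \cite{ref9}) and $\langle \mathcal{T}_2 u, \varphi\rangle = \int_\Omega \frac{|\nabla u|^{2p(x)-2}\nabla u}{\sqrt{1+|\nabla u|^{2p(x)}}}\cdot\nabla\varphi\, dx$ carries the mean-curvature-like contribution. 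Since sums of monotone operators are monotone and sums of $(S_+)$ operators with monotone ones stay $(S_+)$, the strategy is to verify monotonicity and continuity for each piece, then combine.

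\textbf{Boundedness and continuity.} First I would check that $\mathcal{T}$ is well-defined into $W^{-1,p'(x)}(\Omega)$ and bounded. The key pointwise bound is that the vector field $A(x,\xi) = |\xi|^{p(x)-2}\xi + \frac{|\xi|^{2p(x)-2}\xi}{\sqrt{1+|\xi|^{2p(x)}}}$ satisfies $|A(x,\xi)| \leq |\xi|^{p(x)-1} + |\xi|^{p(x)-1} = 2|\xi|^{p(x)-1}$, because the capillarity factor $\frac{|\xi|^{p(x)}}{\sqrt{1+|\xi|^{2p(x)}}} \leq 1$. Hence $|A(x,\nabla u)| \in L^{p'(x)}(\Omega)$ whenever $u \in W^{1,p(x)}(\Omega)$, and the Hölder inequality \eqref{i1} together with the modular estimate \eqref{i7} gives the bound on $\|\mathcal{T}u\|_{-1,p'(x)}$ in terms of $\|u\|_{1,p(x)}$. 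For continuity, I would take $u_n \to u$ in $W^{1,p(x)}(\Omega)$, pass to a subsequence with $\nabla u_n \to \nabla u$ a.e. and dominated in $L^{p(x)}$, invoke continuity of $\xi \mapsto A(x,\xi)$ to get $A(x,\nabla u_n) \to A(x,\nabla u)$ a.e., and then apply a variable-exponent dominated-convergence / Vitali argument (using equation \eqref{i5} to translate modular convergence into norm convergence) to conclude $\mathcal{T}u_n \to \mathcal{T}u$ in the dual norm.

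\textbf{Strict monotonicity.} This rests on the elementary vector inequality $(A(x,\xi)-A(x,\zeta))\cdot(\xi-\zeta) > 0$ for $\xi \neq \zeta$. For $\mathcal{T}_1$ this is the standard strict monotonicity of $\xi \mapsto |\xi|^{p(x)-2}\xi$. For $\mathcal{T}_2$ I would show the map $\xi \mapsto \frac{|\xi|^{2p(x)-2}\xi}{\sqrt{1+|\xi|^{2p(x)}}}$ is itself monotone; the cleanest route is to observe it is the gradient in $\xi$ of the convex scalar function $g(\xi) = \sqrt{1+|\xi|^{2p(x)}}$ (up to the $1/p(x)$ weight matching the functional $\mathcal{J}$), so monotonicity follows from convexity of $g$ in $\xi$. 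Adding the strict part from $\mathcal{T}_1$ yields $\langle \mathcal{T}u - \mathcal{T}v, u-v\rangle > 0$ for $u \neq v$, which is the claimed strict monotonicity.

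\textbf{The $(S_+)$ property, which I expect to be the main obstacle.} Here I would take $u_n \rightharpoonup u$ weakly with $\limsup_n \langle \mathcal{T}u_n, u_n - u\rangle \leq 0$ and aim to deduce $u_n \to u$ strongly. The standard technique is to write $\langle \mathcal{T}u_n - \mathcal{T}u, u_n - u\rangle = \langle \mathcal{T}u_n, u_n-u\rangle - \langle \mathcal{T}u, u_n-u\rangle$; the second term vanishes by weak convergence, so the nonnegative quantity $\langle \mathcal{T}u_n - \mathcal{T}u, u_n - u\rangle \to 0$. Because the integrand $(A(x,\nabla u_n)-A(x,\nabla u))\cdot(\nabla u_n - \nabla u)$ is pointwise nonnegative, this forces it to $0$ in $L^1$, hence (along a subsequence) a.e., and the \emph{strict} monotonicity lets me recover $\nabla u_n \to \nabla u$ a.e. The delicate part in the variable-exponent setting is converting this a.e. gradient convergence into norm convergence $|\nabla u_n - \nabla u|_{p(x)} \to 0$: one must control the modular $\rho_{p(x)}(\nabla u_n - \nabla u)$, typically via a Fatou/Young-inequality argument that extracts a coercivity lower bound $c\,|\xi-\zeta|^{p(x)} \leq (A(x,\xi)-A(x,\zeta))\cdot(\xi-\zeta)$ from the $p(x)$-Laplacian part $\mathcal{T}_1$ alone (the capillarity term only helps, being monotone). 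Establishing this uniform coercivity estimate and then using \eqref{i5} to pass from modular to norm convergence is the technical heart of the proof; the capillarity term is handled throughout simply by its boundedness $\leq 1$ and its monotonicity, so it never destroys the estimates coming from the principal $p(x)$-Laplacian part.
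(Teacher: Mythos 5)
Your proposal is essentially correct, but note first that the paper contains no proof of this lemma at all: the authors simply cite \cite[Proposition 3.1]{Rodri}, so what you have written is a self-contained argument for a result the paper outsources. Your sketch follows the standard route and the key steps check out: the pointwise bound $\frac{|\xi|^{2p(x)-1}}{\sqrt{1+|\xi|^{2p(x)}}}\le |\xi|^{p(x)-1}$ gives boundedness via H\"older and the modular estimates; the capillarity field equals $\frac{1}{p(x)}\nabla_\xi\sqrt{1+|\xi|^{2p(x)}}$, and since $t\mapsto\sqrt{1+t^{2p}}$ is convex and nondecreasing on $[0,\infty)$ for $p\ge 1$, that field is monotone, so strict monotonicity of $\mathcal{T}$ comes from the $p(x)$-Laplacian part alone, exactly as you say. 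Two refinements are worth making. First, your $(S_+)$ paragraph is more complicated than it needs to be: once you invoke the inequality $\bigl(|\xi|^{p-2}\xi-|\zeta|^{p-2}\zeta\bigr)\cdot(\xi-\zeta)\ge 2^{-p}|\xi-\zeta|^{p}$, the a.e.-convergence detour is superfluous, since monotonicity of the capillarity part plus this bound gives $\rho_{p(x)}(\nabla u_n-\nabla u)\le 2^{p^+}\langle\mathcal{T}u_n-\mathcal{T}u,\,u_n-u\rangle\to 0$ directly, and \eqref{i5} converts modular to norm convergence; alternatively, your own opening observation already closes the argument, because $\mathcal{T}_1$ is classically $(S_+)$ and adding a monotone demicontinuous operator preserves the $(S_+)$ property. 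Second, you should state explicitly that this coercivity inequality is only valid for $p(x)\ge 2$; it is available here precisely because of the standing assumption $2\le p^-\le p(x)$ made in Section \ref{Result}, and for exponents below $2$ one would need the variant with the factor $(|\xi|+|\zeta|)^{p-2}$, which complicates the passage to modular convergence. Finally, a cosmetic point: strong convergence of the gradients gives convergence in the paper's gradient (semi)norm, and convergence of $u_n$ itself in $L^{p(x)}(\Omega)$ follows from the compact embedding $W^{1,p(x)}(\Omega)\hookrightarrow L^{p(x)}(\Omega)$, so the conclusion $u_n\to u$ holds in the full norm $\vert u\vert_{p(x)}+\vert\nabla u\vert_{p(x)}$; this deserves a sentence since the problem is posed on $W^{1,p(x)}(\Omega)$ rather than $W^{1,p(x)}_0(\Omega)$, where $\vert\nabla\cdot\vert_{p(x)}$ alone is not a norm.
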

	\begin{lemma}\label{lemma3}
		Assume that the assumptions $(A_1)-(A_2)$ hold. Then the operator\\ $\mathcal{S}:W^{1,p(x)}(\Omega)\rightarrow W^{-1,p'(x)}(\Omega)$ defined by
		$$
		\displaystyle \langle \mathcal{S}u, \varphi\rangle=-\int_{\Omega}\lambda f(x,u, \nabla u)\varphi dx,\ \mbox{ for all }\ u,\varphi\in W^{1,p(x)}(\Omega)
		$$
		is compact.
	\end{lemma}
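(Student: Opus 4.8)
The plan is to prove that $\mathcal{S}$ is compact by showing it factors through a compact embedding, so that the gradient-dependence of $f$ is tamed. First I would define the Nemytskii-type operator associated to $f$ and factor $\mathcal{S}$ as a composition. Concretely, introduce the map $\Phi\colon W^{1,p(x)}(\Omega)\to L^{r'(x)}(\Omega)$ given by $\Phi(u)=\lambda f(x,u,\nabla u)$, where $r(x)=(q(x)-1)p'(x)$ as in the preceding remark, and let $\iota^{*}\colon L^{r'(x)}(\Omega)\to W^{-1,p'(x)}(\Omega)$ be the adjoint of the compact embedding $\iota\colon W^{1,p(x)}(\Omega)\hookrightarrow L^{r(x)}(\Omega)$ (which is compact because $W^{1,p(x)}\hookrightarrow L^{p(x)}$ is compact and $L^{p(x)}\hookrightarrow L^{r(x)}$ is continuous since $r(x)<p(x)$). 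Then $\mathcal{S}=\iota^{*}\circ\Phi$ up to the identification of the pairing, and since the adjoint of a compact operator is compact, it suffices to prove that $\Phi$ is \emph{continuous and bounded} from $W^{1,p(x)}(\Omega)$ into the fixed Lebesgue space.

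The key analytic step is the boundedness and continuity of the Nemytskii operator. For boundedness, I would invoke the growth hypothesis $(A_2)$: $|f(x,\zeta,\xi)|\le \beta_1(\gamma(x)+|\zeta|^{q(x)-1}+|\xi|^{q(x)-1})$ with $\gamma\in L^{p'(x)}(\Omega)$. Raising to the power $p'(x)$, one controls $\int_{\Omega}|f(x,u,\nabla u)|^{p'(x)}dx$ by terms like $\int_{\Omega}|u|^{(q(x)-1)p'(x)}dx=\int_\Omega |u|^{r(x)}dx$ and the analogous gradient term, each finite and bounded on bounded subsets of $W^{1,p(x)}(\Omega)$ by Proposition on modular--norm relations \eqref{i3}--\eqref{i7} together with the embedding $L^{p(x)}\hookrightarrow L^{r(x)}$. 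This shows $\Phi$ maps bounded sets to bounded sets in $L^{p'(x)}(\Omega)\hookrightarrow L^{r'(x)}(\Omega)$. For continuity, I would argue sequentially: if $u_n\to u$ strongly in $W^{1,p(x)}(\Omega)$, then $u_n\to u$ and $\nabla u_n\to\nabla u$ in $L^{p(x)}(\Omega)$, hence (after passing to a subsequence) a.e.\ with an integrable majorant; the Carath\'eodory condition $(A_1)$ gives $f(x,u_n,\nabla u_n)\to f(x,u,\nabla u)$ a.e., and a variable-exponent dominated convergence argument (using \eqref{i5} to translate modular convergence into norm convergence) upgrades this to convergence in $L^{p'(x)}(\Omega)$. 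A standard subsequence-of-subsequence argument removes the need to extract.

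Putting these together, $\mathcal{S}$ is the composition of the continuous bounded operator $\Phi$ with the compact operator $\iota^{*}$, so $\mathcal{S}$ is compact. I expect the \textbf{main obstacle} to be the continuity of the Nemytskii operator in the variable-exponent setting: the usual constant-exponent dominated-convergence proof must be adapted so that the majorant lies in the correct space $L^{p'(x)}(\Omega)$, which is where the relations \eqref{i3}--\eqref{i7} between norm and modular, and the careful bookkeeping of the exponent $r(x)=(q(x)-1)p'(x)<p(x)$, do the real work. Once continuity and boundedness of $\Phi$ are secured, the compactness of $\mathcal{S}$ follows immediately from the compactness of the embedding, which is the conceptually easy half of the argument.
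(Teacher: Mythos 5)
Your overall strategy --- factoring $\mathcal{S}$ as a Nemytskii operator followed by the adjoint of a compact embedding, proving boundedness of the Nemytskii map from the growth condition $(A_2)$ and the modular--norm relations, and proving its continuity by extracting an a.e.\ convergent subsequence with integrable majorants, applying the Carath\'eodory condition and dominated convergence, then transferring modular convergence to norm convergence via \eqref{i5} --- is exactly the proof in the paper. However, there is a concrete error in your choice of spaces. You send $\Phi$ into $L^{r'(x)}(\Omega)$ and justify this with the ``embedding'' $L^{p'(x)}(\Omega)\hookrightarrow L^{r'(x)}(\Omega)$. That embedding goes the wrong way: since $r(x)<p(x)$, conjugation reverses the inequality, so $r'(x)>p'(x)$, and on a bounded domain Remark \ref{embedding} gives $L^{r'(x)}(\Omega)\hookrightarrow L^{p'(x)}(\Omega)$, not the converse. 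Worse, $\Phi$ genuinely need not map into $L^{r'(x)}(\Omega)$ at all: hypothesis $(A_2)$ only provides $\gamma\in L^{p'(x)}(\Omega)$, and a function in $L^{p'(x)}(\Omega)$ need not lie in the smaller space $L^{r'(x)}(\Omega)$ (with constant exponents $p=4$, $q=2$ one has $p'=4/3$, $r=4/3$, $r'=4$, and $L^{4/3}(\Omega)\not\subset L^{4}(\Omega)$). So the factorization $\mathcal{S}=\iota^{*}\circ\Phi$ through $L^{r'(x)}(\Omega)$, as you set it up, fails at its first step.

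The repair is immediate, and it is what the paper does: your own boundedness estimate controls the modular $\int_{\Omega}\vert\lambda f(x,u,\nabla u)\vert^{p'(x)}dx$, i.e.\ it proves that $\Phi$ is bounded (and, with your continuity argument, continuous) as a map into $L^{p'(x)}(\Omega)$; keep that as the target, and compose with $I^{*}:L^{p'(x)}(\Omega)\rightarrow W^{-1,p'(x)}(\Omega)$, the adjoint of the compact embedding $I:W^{1,p(x)}(\Omega)\hookrightarrow L^{p(x)}(\Omega)$, so that the pairing $\int_{\Omega}\Phi u\,\varphi\,dx$ is finite by the H\"older inequality \eqref{i1}. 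The exponent $r(x)=(q(x)-1)p'(x)<p(x)$ is needed only inside the boundedness estimate, to see that $\vert u\vert^{q(x)-1}$ and $\vert\nabla u\vert^{q(x)-1}$ belong to $L^{p'(x)}(\Omega)$ via the embedding $L^{p(x)}(\Omega)\hookrightarrow L^{r(x)}(\Omega)$; it should not be promoted to the target space of $\Phi$. With this correction (and the harmless sign adjustment $\Phi u=-\lambda f(x,u,\nabla u)$), your argument coincides with the paper's proof.
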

	\begin{proof}
		Let $\Phi$ : $W^{1,p(x)}(\Omega)\rightarrow L^{p'(x)}(\Omega)$ be an operator defined by
		\begin{equation*}
			\Phi u(x) :=-\lambda f(x,\ u,\ \nabla u) \ \mbox{ for } \ u\in W^{1,p(x)}(\Omega)\ \mbox{ and }\ x\in\Omega.
		\end{equation*}
		Next, we split the proof in several steps.
		\\
		We first show that $\Phi$ is bounded and continuous.
		By using $(A_2)$, the inequalities \eqref{i6} and \eqref{i7}, we obtain
		\begin{equation*}
			\begin{split}
				\displaystyle \vert \Phi u\vert _{p'(x)} &  \leq \rho_{p'(x)}(\Phi u)+1 \\
				& \displaystyle = \int_{\Omega}\vert \lambda f(x, u(x),\nabla u(x))\vert ^{p'(x)}dx+1\\
				& \displaystyle = \int_{\Omega}\vert \lambda\vert^{p'(x)}\vert f(x, u(x),\nabla u(x))\vert ^{p'(x)}dx+1\\
				& \displaystyle \leq\Big(\vert \lambda\vert ^{p'^{-}}+\vert \lambda\vert ^{p'^{+}}\Big)\int_{\Omega}\vert C_{1}\Big(\gamma(x)+\vert u\vert ^{q(x)-1}+\vert \nabla u\vert ^{q(x)-1}\Big)\vert ^{p'(x)}dx+1\\
				& \displaystyle \leq {\it const}\Big(\vert \lambda\vert ^{p'^{-}}+\vert \lambda\vert ^{p'^{+}}\Big)\Big (\rho_{p'(x)}(\gamma)+\rho_{r(x)}(u)+\rho_{r(x)}(\nabla u)\Big)+1\\
				& \displaystyle \leq {\it const} \Big(\vert \gamma\vert _{p(x)}^{p^{\prime+}}+\vert u\vert _{r(x)}^{r^{+}}+\vert u\vert _{r(x)}^{r^{-}}+\vert \nabla u\vert _{r(x)}^{r^{+}}+\vert \nabla u\vert _{r(x)}^{r^{-}}\Big)+1,
			\end{split}
		\end{equation*}
		
		for all $u\in W^{1,p(x)}(\Omega)$.
		\\
		Then, by the continuous embedding $L^{p(x)}\hookrightarrow L^{r(x)}$ and \eqref{i9}, we have
		\begin{equation*}
			\vert \Phi u\vert _{p'(x)}\leq const(\vert \gamma\vert _{p(x)}^{p^{\prime+}}+\vert u\vert _{1,p(x)}^{r^{+}}+\vert u\vert _{1,p(x)}^{r^{-}})+1.
		\end{equation*}
		This implies that $\Phi$ is bounded on $W^{1,p(x)}(\Omega)$. \\
		To show that $\Phi$ is continuous, let $u_{n}\rightarrow u$ in $W^{1,p(x)}(\Omega)$. Then $u_{n}\rightarrow u$ in $L^{p(x)}(\Omega)$ and $\nabla u_{n}\rightarrow\nabla u$ in $(L^{p(x)}(\Omega))^{N}$. Hence there exist a subsequence $(u_{k})$ of $(u_{n})$ and measurable functions $\phi$ in $L^{p(x)}(\Omega)$ and $\psi$ in $(L^{p(x)}(\Omega))^{N}$ such that
		\begin{equation*}
			u_{k}(x)\rightarrow u(x)\ \mbox{ and }\ \nabla u_{k}(x)\rightarrow\nabla u(x),
		\end{equation*}
		\begin{equation}\label{i10}
		\vert u_{k}(x)\vert \leq \phi(x) \ \mbox{ and }\ \vert \nabla u_{k}(x)\vert \leq\vert \psi(x)\vert,
		\end{equation}
		
		for a.e. $ x\in\Omega$ and all $k\in \mathbb{N}$.
		\\
		Hence, thanks to $(A_1)$, we get, as $ k \longrightarrow \infty $
		\begin{equation*}
			f(x,u_{k}(x),\nabla u_{k}(x))\rightarrow f(x,u(x),\nabla u(x))\ \mbox{  a.e. }\ x\in\Omega.
		\end{equation*}
		On the other hand, it follows from $(A_2)$ and \eqref{i10} that
		\begin{equation*}
			\vert f(x, u_{k}(x), \nabla u_{k}(x))\vert \leq \beta_{1}(\gamma(x)+\vert \phi(x)\vert ^{q(x)-1}+\vert \psi(x)\vert ^{q(x)-1}),
		\end{equation*}
		
		for a.e. $ x\in\Omega$ and for all $k\in \mathbb{N}$.
		\\
		Since
		\begin{equation*}
			\gamma+\vert \phi\vert ^{q(x)-1}+\vert \psi(x)\vert ^{q(x)-1}\in L^{p'(x)}(\Omega),
		\end{equation*}
		and
		\begin{equation*}
			\displaystyle \rho_{p'(x)}(\Phi u_{k}-\Phi u)=\int_{\Omega}\vert f(x,u_{k}(x), \nabla u_{k}(x))-f(x, u(x),\nabla u(x))\vert ^{p'(x)}dx,
		\end{equation*}
		therefore, the Lebesgue's theorem and the equivalence \eqref{i5} implies that
		\begin{equation*}
			\displaystyle \Phi u_{k}\rightarrow\Phi u\ \mbox{ in }\ L^{p'(x)}(\Omega).
		\end{equation*}
		Thus the entire sequence $(\Phi u_{n})$ converges to $\Phi u$ in $L^{p'(x)}(\Omega)$.
		\\
		Moreover, let $I^{*}$ : $L^{p'(x)}(\Omega)\rightarrow W^{-1,p'(x)}(\Omega)$ be the adjoint operator for the embedding of\\ $I:W^{1,p(x)}(\Omega)\rightarrow L^{p(x)}(\Omega)$.\\
		We then define
		\begin{equation*}
			\displaystyle  I^{*}o\Phi : W^{1,p(x)}(\Omega)\rightarrow W^{-1,p'(x)}(\Omega),
		\end{equation*}
		
		which is well-defined by assumption $(A_2)$.\\
		Since the embedding $I$ is compact, it is known that the adjoint operator $I^{*}$ is also compact. Therefore, $ I^{*}o\Phi$ is compact. This completes the proof of the Lemma \ref{lemma3}.	
	\end{proof}
	Now we give the proof of the Theorem \ref{thex}. For that, we transform this Neumann boundary value problem into a new one governed by a Hammerstein equation, so by using the theory of topological degree introduced in section \ref{Degree}, we show the existence of a weak solutions to the state problem.
	
	First, for all $u, \varphi\in W^{1,p(x)}(\Omega)$, we define the operators $\mathcal{T}$ and $\mathcal{S}$, as defined in Lemmas \ref{lemma2} and \ref{lemma3} respectively,
	$$\begin{array}{lclllll}
	\mathcal{T}:W^{1,p(x)}(\Omega)\longrightarrow W^{-1,p'(x)}(\Omega) \\
	\displaystyle\langle \mathcal{T}u,\varphi \rangle= \int_{\Omega}\Big(\vert\nabla u\vert^{p(x)-2}\nabla u+\frac{\vert\nabla u\vert^{2p(x)-2}\nabla u}{\sqrt{1+\vert\nabla u\vert^{2p(x)}}}\Big)\nabla\varphi dx, 
	\end{array} $$
	$$\begin{array}{lclllll}
	\mathcal{S}:W^{1,p(x)}(\Omega)\longrightarrow W^{-1,p'(x)}(\Omega)\\
	\displaystyle \langle \mathcal{S}u,\varphi\rangle=-\int_{\Omega}f(x,u, \nabla u)\varphi dx.  
	\end{array} $$
	Then $u\in W^{1,p(x)}(\Omega)$ is a weak solution of \eqref{P} if and only if
	\begin{equation}\label{i11}
	\mathcal{T}u=-\mathcal{S}u.
	\end{equation}
	Thanks to the properties of the operator $\mathcal{T}$ seen in lemma \ref{lemma2} and in view of Minty-Browder Theorem (see \cite[Theorem 26 A]{ref14}, the inverse operator
	\begin{equation*}
		\mathcal{L}:=\mathcal{T}^{-1} : W^{-1,p'(x)}(\Omega)\rightarrow W^{1,p(x)}(\Omega),
	\end{equation*}
	is bounded, continuous and of class $(S_{+})$. Moreover, note by Lemma \ref{lemma3} that the operator $\mathcal{S}$ is bounded, continuous and quasimonotone.
	\\
	Consequently, the equation \eqref{i11} is equivalent to the operator equation
	\begin{equation}\label{i12}
	u=\mathcal{L}\varphi\ \mbox{ and }\ \varphi+\mathcal{S}o\mathcal{L}\varphi=0.
	\end{equation}
	Following Zeidler's terminology \cite{ref14}, the equation $\varphi+\mathcal{S}o\mathcal{L}\varphi=0$ is an abstract Hammerstein equation in the reflexive Banach space $W^{-1,p'(x)}(\Omega)$.
	\\	
	To solve equation \eqref{i12}, we will apply the degree theory introducing in section \ref{Degree}. To do this, set
	\begin{equation*}\label{i13}
		\mathcal{B}:=\Big\{\varphi\in W^{-1,p'(x)}(\Omega):\exists\; t\in[0,1] \ \mbox{ such that  }\ \varphi+t\mathcal{S}o\mathcal{L}\varphi=0 \Big\}.
	\end{equation*}	
	Next,we prove that $ \mathcal{B}$ is bounded in $\in W^{-1,p'(x)}(\Omega)$.\\
	Let $\varphi\in \mathcal{B}$ and set $u:=\mathcal{L}\varphi$, then $\vert \mathcal{L}\varphi\vert _{1,p(x)}=\vert \nabla u\vert _{p(x)}$.	\\
	If $\vert \nabla u\vert _{p(x)}\leq 1$, then $\vert \mathcal{L}\varphi\vert _{1,p(x)}$ is bounded.\\
	If $\vert \nabla u\vert _{p(x)}>1$, then by the implication \eqref{i3}, the growth condition $(A_{2})$, the H\"{o}lder inequality \eqref{i2}, the inequality \eqref{i7} and the Young inequality, we get
	\begin{equation*}
		\begin{split}
			\vert \mathcal{L}\varphi\vert _{1,p(x)}^{p^{-}} &= \vert \nabla u\vert _{p(x)}^{p-}\\
			&\leq\rho_{p(x)}(\nabla u)\\
			&=\langle \varphi,\  \mathcal{L}\varphi\rangle\\
			&= -t\langle \mathcal{S}o\mathcal{L}\varphi,\  \mathcal{L}\varphi\rangle\\
			&=\ t\int_{\Omega}\lambda f(x, u, \nabla u)udx\\
			&\leq {\it const} \Big(\displaystyle \int_{\Omega}\vert \gamma(x)u(x)\vert dx+\rho_{q(x)}(u)+\int_{\Omega}\vert \nabla u\vert ^{q(x)-1}\vert u\vert dx\Big)\\
			&\leq {\it const} \Big(2\vert \gamma\vert _{p'(x)}\vert u\vert _{p(x)}+\vert u\vert _{q(x)}^{q^{+}}+\vert u\vert _{q(x)}^{q^{-}}+\displaystyle \frac{1}{q^{'-}}\rho_{q(x)}(\nabla u)+\frac{1}{q-}\rho_{q(x)}(u)\Big)\\
			&\leq {\it const} \Big(\vert u\vert _{p(x)}+\vert u\vert _{q(x)}^{q^{+}}+\vert u\vert _{q(x)}^{q^{-}}+\vert \nabla u\vert _{q(x)}^{q^{+}}\Big).
		\end{split}
	\end{equation*} 
	From \eqref{i9} and the continuous embedding $L^{p(x)}\hookrightarrow L^{q(x)}$, we conclude that 
	\begin{equation*}
		\vert \mathcal{L}\varphi\vert _{1,p(x)}^{p^{-}}\leq {\it const}\Big(\vert \mathcal{L}\varphi\vert _{1,p(x)}+\vert \mathcal{L}\varphi\vert _{1,p(x)}^{q^{+}}\Big).
	\end{equation*}
	So, we infer that $\Big\{\mathcal{L}\varphi\vert \varphi\in \mathcal{B}\Big\}$ is bounded.\\
	Since the operator $\mathcal{S}$ is bounded, it is obvious from \eqref{i12} that $\mathcal{B}$ is bounded in $W^{-1,p'(x)}(\Omega)$. Consequently, there exists $R >0$ such that
	\begin{equation*}
		\displaystyle\vert \varphi\vert _{-1,p'(x)}<R\ \mbox{ for all } \ \varphi\in \mathcal{B}.
	\end{equation*}
	Therefore
	\begin{equation*}
		\varphi+t\mathcal{S}o\mathcal{L}\varphi\neq 0\ \mbox{ for all } \ \varphi\in\partial B_{R}(0)\ \mbox{ and all } \ t\in[0,1],
	\end{equation*}
	
	where $B_{R}(0)$ is the ball of center $0$ and radius $R$ in $W^{-1,p'(x)}(\Omega)$.\\
	Moreover, from Lemma \ref{lemma1} we conclude that
	\begin{equation*}
		I+\mathcal{S}o\mathcal{L}\in \mathcal{F}_{\mathcal{L}}(\overline{\mathrm{\mathcal{B}}_{\mathrm{R}}(0)})\ \mbox{ and } \ I=\mathcal{T}o\mathcal{L}\in \mathcal{F}_{\mathcal{L}}(\overline{\mathrm{\mathcal{B}}_{\mathrm{R}}(0)}).
	\end{equation*}
	Since the operators $I$, $\mathcal{S}$ and $\mathcal{L}$ are bounded, $I+\mathcal{S}o\mathcal{L}$ is also bounded. So, we
	conclude that
	
	\begin{equation*}
		I+\mathcal{S}o\mathcal{L}\in \mathcal{F}_{\mathcal{L},B}(\overline{\mathrm{\mathcal{B}}_{\mathrm{R}}(0)})\ \mbox{ and } \ I=\mathcal{T}o\mathcal{L}\in \mathcal{F}_{\mathcal{L},B}(\overline{\mathrm{\mathcal{B}}_{\mathrm{R}}(0)}).
	\end{equation*}
	Next, consider a homotopy $\Lambda$ : $[0,1]\times\overline{B_{R}(0)}\rightarrow W^{-1,p'(x)}(\Omega)$ given by
	\begin{equation*}
		\Lambda(t,\varphi) :=\varphi+t\mathcal{S}o\mathcal{L}\varphi \ \mbox{ for }\ (t, \varphi)\in[0,1]\times\overline{B_{R}(0)}.
	\end{equation*}
	Hence, by using  the normalization property and the homotopy invariance of degree $d$, we obtain
	\begin{equation*}
		d(I+\mathcal{S}o\mathcal{L}, B_{R}(0), 0)=d(I, B_{R}(0),\ 0)=1\neq 0.
	\end{equation*}
	Then, there exists a point $\varphi\in B_{R}(0)$ such that
	\begin{equation*}
		\varphi+\mathcal{S}o\mathcal{L}\varphi=0.
	\end{equation*}
	Thus, we conclude that $u=\mathcal{L}\varphi$ is a weak solution of \eqref{P}. The proof is completed.

\end{document}